\newtheorem{theorem}{Theorem}[section]
\newtheorem{proposition}[theorem]{Proposition}
\newtheorem{lemma}[theorem]{Lemma}
\newtheorem{definition}[theorem]{Definition}
\newtheorem{example}[theorem]{Example}
\newtheorem{rmk}[theorem]{Remark}
\newtheorem{corollary}[theorem]{Corollary}
\newtheorem{conjecture}[theorem]{Conjecture}
\newenvironment{eg}{\begin{example}\rm}{\end{example}}
\newenvironment{remark}{\begin{rmk}\rm}{\end{rmk}}
\numberwithin{equation}{section}
\newcommand{\N}{\mathbf{N}}
\newcommand{\Q}{\mathbf{Q}}
\newcommand{\R}{\mathbf{R}}
\newcommand{\Z}{\mathbf{Z}}
\renewcommand{\phi}{\varphi}
\newcommand{\ul}[1]{\underline{#1}}
\def\Ddots{\mathinner{\mkern1mu\raise\p@
\vbox{\kern7\p@\hbox{.}}\mkern2mu
\raise4\p@\hbox{.}\mkern2mu\raise7\p@\hbox{.}\mkern1mu}}
\newcommand{\QP}{\mathbf{QP}_{\gg 0}}
\DeclareMathOperator{\ggcd}{ggcd}
\DeclareMathOperator{\lcm}{lcm}
\newcommand{\lc}{\operatorname{lc}}
\newcommand{\conv}{\operatorname{conv}}
\title{Generalized Ehrhart polynomials}
\date{October 29, 2010}
\author{Sheng Chen, Nan Li, and Steven V Sam}
\begin{document}

\maketitle
\begin{abstract}

  Let $P$ be a polytope with rational vertices. A classical theorem of
  Ehrhart states that the number of lattice points in the dilations
  $P(n) = nP$ is a quasi-polynomial in $n$. We generalize this theorem
  by allowing the vertices of $P(n)$ to be arbitrary rational
  functions in $n$. In this case we prove that the number of lattice
  points in $P(n)$ is a quasi-polynomial for $n$ sufficiently
  large. Our work was motivated by a conjecture of Ehrhart on the
  number of solutions to parametrized linear Diophantine equations
  whose coefficients are polynomials in $n$, and we explain how these
  two problems are related.

\end{abstract}

\section{Introduction.}

In this article, we relate two problems, one from classical number
theory, and one from lattice point enumeration in convex bodies.
Motivated by a conjecture of Ehrhart \cite{ehrhart} and a result of Xu
\cite{xu}, we study linear systems of Diophantine equations with a
single parameter. To be more precise, we suppose that the coefficients
of our system are given by polynomial functions in a variable $n$, and
also that the number of solutions $f(n)$ in nonnegative integers for
any given value of $n$ is finite. We are interested in the behavior of
the function $f(n)$, and in particular, we prove that $f(n)$ is
\textbf{eventually a quasi polynomial}, i.e., there exists some period
$s$ and polynomials $f_i(t)$ for $i=0,\dots,s-1$ such that for $t \gg
0$, the number of solutions for $n \equiv i \pmod s$ is given by
$f_i(n)$. The other side of our problem can be stated in a similar
fashion: suppose that $P(n)$ is a convex polytope whose vertices are
given by rational functions in $n$. Then the number of integer points
inside of $P(n)$, as a function of $n$, enjoys the same properties as
that of $f$ as above. We now describe in more detail some examples and
the statements of our results.

\subsection{Diophantine equations.}

As a warmup to our result, we begin with two examples. The first is a
result of Popoviciu. Let $a$ and $b$ be relatively prime positive
integers. We wish to find a formula for the number of nonnegative
integer solutions $(x,y)$ to the equation $ax + by = n$. For a real
number $x$, let $\lfloor x \rfloor$ denote the greatest integer less
than or equal to $x$, and define $\{ x \} = x - \lfloor x \rfloor$ to
be the fractional part of $x$. Then the number of such solutions is
given by the formula
\begin{align} \label{popoviciu}
\frac{n}{ab} - \left\{ \frac{na^{-1}}{b} \right\} - \left\{
  \frac{nb^{-1}}{a} \right\} + 1,
\end{align}
where $a^{-1}$ and $b^{-1}$ satisfy $aa^{-1} \equiv 1 \pmod b$ and
$bb^{-1} \equiv 1 \pmod a$. See \cite[Chapter 1]{ccd} for a proof. In
particular, this function is a quasi-polynomial in $n$.

For the second example which is a generalization of the first example,
consider the number of solutions $(x,y,z) \in \Z_{\ge 0}^3$ to the
matrix equation
\begin{align} \label{matrixexample}
\left( \begin{matrix} x_1 & x_2 & x_3 \\ y_1 & y_2 & y_3 \end{matrix}
\right) \left( \begin{matrix} x \\ y \\ z \end{matrix} \right) =
\left( \begin{matrix} m_1 \\ m_2 \end{matrix} \right)
\end{align}
where the $x_i$ and $y_i$ are fixed positive integers and $x_{i+1}y_i
< x_iy_{i+1}$ for $i=1,2$. Write $Y_{ij} = x_iy_j - x_jy_i$. We assume
that $\gcd(Y_{12}, Y_{13}, Y_{23}) = 1$, so that there exist integers
(not unique) $f_{ij}, g_{ij}$ such that
\[
\gcd(f_{12}Y_{13} + g_{12}Y_{23}, Y_{12}) = 1, \quad \gcd(f_{13}Y_{12}
+ g_{13}Y_{23}, Y_{13}) = 1, \quad \gcd(f_{23}Y_{13} + g_{23}Y_{12},
Y_{23}) = 1.
\]
Now define two regions $\Omega_i = \{(x,y) \mid \frac{y_i}{x_i} <
\frac{y}{x} < \frac{y_{i+1}}{x_{i+1}} \}$ for $i=1,2$. Then if $m =
(m_1, m_2) \in \Z^2$ is in the positive span of the columns of the
matrix in \eqref{matrixexample}, there exist Popoviciu-like formulas
for the number of solutions of \eqref{matrixexample} which depend only
on whether $m \in \Omega_1$ or $m \in \Omega_2$, and the numbers
$Y_{ij}, f_{ij}, g_{ij}, x_i, y_i$. See Section~\ref{section:example1} for the
precise statement.

In particular, one can replace the $x_i$, $y_i$, and $m_i$ by
polynomials in $n$ in such a way that for all values of $n$, the
condition $\gcd(Y_{12}, Y_{13}, Y_{23}) = 1$ holds. For a concrete
example, consider the system
\[
\left( \begin{matrix} 2n+1 & 3n+1 & n^2 \\ 2 & 3 & n+1 \end{matrix}
\right) \left( \begin{matrix} x \\ y \\ z \end{matrix} \right) =
\left( \begin{matrix} 3n^3 + 1 \\ 3n^2 + n - 1 \end{matrix} \right).
\]
Then for $n \gg 0$, we have that
\[
\frac{3}{3n+1} < \frac{3n^2 + n - 1}{3n^3 + 1} < \frac{n+1}{n^2},
\]
so that for these values of $n$, there exists a quasi-polynomial that
counts the number of solutions $(x,y,z)$.

Given these examples, we are ready to state our general theorem. We
denote by $\QP$ the set of functions $f \colon \Z \to \Z$ which are
eventually quasi-polynomial.

\begin{theorem} \label{ehrhartconj} Let $A(n)$ be an $m \times k$
  matrix, and $b(n)$ be a column vector of length $m$, such that their
  entries are integer coefficient polynomials in $n$. If $f(n)$
  denotes the number of nonnegative integer vectors $x$ satisfying
  $A(n)x = b(n)$ (assuming that these values are finite), then $f \in
  \QP$.
\end{theorem}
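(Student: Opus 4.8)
The plan is to recast the problem geometrically and prove the following statement, which contains Theorem~\ref{ehrhartconj}: if $P(n)\subseteq\R^k$ is a polytope all of whose vertices are given by rational functions of $n$, then $n\mapsto|P(n)\cap\Z^k|$ lies in $\QP$. Indeed, $f(n)$ is the number of lattice points of the rational polyhedron $P(n)=\{x\in\R^k_{\ge 0}:A(n)x=b(n)\}$. If $P(n)$ is unbounded for some value of $n$, then, its recession cone being a nontrivial rational cone, it is either lattice-point-free (so $f(n)=0$) or contains infinitely many lattice points, the latter being excluded by hypothesis; so after discarding an eventually-zero contribution we may assume $P(n)$ is a (possibly empty) polytope for $n\gg0$. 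Its vertices arise from the square subsystems obtained by making $k$ of the constraints tight, so by Cramer's rule every candidate vertex is a vector of rational functions of $n$; whether such a subsystem is nonsingular and yields a feasible vertex is decided by the signs of finitely many polynomials in $n$, each of eventually constant sign. Hence for $n\gg0$ the vertex set, the full face lattice, and a triangulation of $P(n)$ into simplices with vertices among those of $P(n)$ can all be chosen independent of $n$. By inclusion–exclusion over the half-open simplices of such a triangulation, it suffices to treat the case in which $P(n)$ is itself a simplex with rational-function vertices.

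For the simplicial case I would induct on the dimension $d$ of $\Delta(n)=\conv(v_0(n),\dots,v_d(n))$, $v_i(n)\in\Q(n)^k$, slicing along a coordinate $x_k$ on which $\Delta(n)$ is nonconstant. Projecting onto that coordinate gives
\[
|\Delta(n)\cap\Z^k|=\sum_{j=\lceil\ell(n)\rceil}^{\lfloor u(n)\rfloor}\bigl|\Delta(n)\cap\{x_k=j\}\cap\Z^k\bigr| ,
\]
where $\ell(n)$ and $u(n)$ are the minimum and maximum of the rational functions $(v_i)_k(n)$, and hence, for $n\gg0$, are particular rational functions (finitely many rational functions being linearly ordered for large $n$). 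Breaking the range of $j$ at the values $(v_i)_k(n)$ — again rational functions of $n$ — the slice $\Delta(n)\cap\{x_k=j\}$ has combinatorial type independent of $j$ on each block and is a polytope in $\R^{k-1}$ whose vertices are rational functions of $n$ and $j$ (in fact affine in $j$ over $\Q(n)$). The inductive hypothesis, applied with the parameter pair $(n,j)$, shows the inner count is on each block an eventually-quasi-polynomial function of $(n,j)$, and the theorem reduces to a closure property: a sum $\sum_{j=\lceil\alpha(n)\rceil}^{\lfloor\beta(n)\rfloor}g(n,j)$ of such a function, taken between rational-function limits, is again eventually quasi-polynomial in $n$.

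Establishing this closure property is, I expect, the main obstacle. One must isolate a class of ``generalized quasi-polynomials'' closed under substitution of a coordinate by a linear-over-$\Q(n)$ expression, under the floor and fractional-part functions, and under summation over a variable with rational-function endpoints, and then check the base cases. Two mechanisms should do the work: the division algorithm, which writes $p/q$ as a genuine polynomial part plus a remainder tending to $0$, so that $\lfloor p/q\rfloor$ differs from a polynomial by an eventually constant error; and the Faulhaber–Bernoulli identity, which expresses a partial sum of a quasi-polynomial as a quasi-polynomial evaluated at the (now generalized) limits. The genuine difficulty is that intermediate terms such as $\{(\text{linear in }j)/q(n)\}$ are periodic in $j$ with period depending on the parameter $n$, so one cannot keep a fixed period during the induction; the resolution is that the relevant summations run over ranges commensurate with that period, and complete sums of fractional parts of linear functions collapse to quasi-polynomials in $n$ of bounded period. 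Carrying this bookkeeping through the induction and unwinding the reductions above yields $f\in\QP$.
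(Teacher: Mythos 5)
Your approach is genuinely different from the paper's. The paper avoids the summation-over-slices problem entirely by a ``writing in base $n$'' trick: it expresses each unknown $x_i$ as $\sum_j x_{ij} n^j$ with digits $0\le x_{ij}<n$, compares coefficients of powers of $n$ (tracking finitely many possible carries), and thereby reduces to finitely many systems whose constraints are all of the form $An+B\le f(x)$ with $A,B\in\Z$ and $f$ a constant-coefficient linear form. Such systems define polytopes whose vertices are \emph{linear} (not merely rational) functions of $n$; the relevant denominators are therefore bounded constants, and classical Ehrhart theory (period $=\operatorname{lcm}$ of fixed denominators) applies directly. This is precisely the situation your triangulate-and-slice strategy cannot reach, because slicing keeps the vertices genuinely rational in $n$.

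There is a real gap in your argument at exactly the point you flag as ``the main obstacle,'' and the sketch you offer to close it does not hold up. After slicing a simplex with rational-function vertices at integer heights $j$, the lattice count of a slice involves fractional parts such as $\{a(n)j/b(n)\}$, which are periodic in $j$ with period $b(n)$ that \emph{grows with $n$}. You assert that ``the relevant summations run over ranges commensurate with that period,'' so that only complete-period sums of fractional parts occur; but there is no reason for the range $\lceil\alpha(n)\rceil$ to $\lfloor\beta(n)\rfloor$ (endpoints arbitrary rational functions) to be a multiple of $b(n)$, and in general it is not. The incomplete-period tails are precisely what make this hard, and handling them requires substantially more machinery (e.g.\ Dedekind-sum-type identities, Barvinok's unimodular cone decomposition in parametric form, or the generalized-division calculus the paper develops in Section~\ref{generalizeddivision}). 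There is a second, smaller issue: you invoke ``the inductive hypothesis, applied with the parameter pair $(n,j)$,'' but your induction is on dimension with a \emph{single} scalar parameter, so the hypothesis does not literally apply to a two-parameter family; you would need to strengthen the statement being inducted on to a bivariate class whose closure properties are exactly what is in question. Your geometric reduction to simplices with rational-function vertices and eventually-stable combinatorics is correct and mirrors Section~\ref{equi} of the paper, but as written the inductive step is not proved.
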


This theorem generalizes the conjecture \cite[Exercise
4.12]{stanley}. See \cite[p. 139]{ehrhart} for some verified cases of
a conjectural multivariable analogue, which we state here. Let $S
\subset \Z^r$ be some subset. We say that a function $f \colon S \to
\Z$ is a {\bf multivariate quasi-polynomial} if there exists a finite
index sublattice $L \subset \Z^r$ such that $f$ is a polynomial
function on each coset of $L$ intersected with $S$.

\begin{conjecture}[Ehrhart] Let $A(n_1, \dots, n_r)$ be an $m \times
  k$ matrix and $b(n_1, \dots, n_r)$ be a column vector of length $m$,
  such that all entries are linear functions in $n_1, \dots, n_r$ with
  integer coefficients such that for all $n_1, \dots, n_r$, the number
  of nonnegative integer solutions $x$ to $A(n_1, \dots, n_r)x =
  b(n_1, \dots, n_r)$ is finite. Then there exist finitely many
  polyhedral regions $R_1, \dots, R_N$ covering $\R^r_{\ge 0}$ such
  that $f$ is a multivariate quasi-polynomial when restricted to each
  $R_i$.
\end{conjecture}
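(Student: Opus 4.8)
The plan is to transport the problem to the geometric side, exactly as one does for Theorem~\ref{ehrhartconj}. For a fixed parameter $n=(n_1,\dots,n_r)$ the solution set $\{x\in\R^k_{\ge 0}:A(n)x=b(n)\}$ is a rational polytope $P(n)$, and $f(n)=|P(n)\cap\Z^k|$. Its vertices are the solutions of the square subsystems obtained by making $k$ of the defining relations tight, so by Cramer's rule each vertex coordinate is a ratio of determinants, i.e.\ a rational function of $n$; moreover which candidate subsystems actually produce vertices, and how those vertices are incident, is controlled by the signs of finitely many polynomials in $n$. So the first step is to cut $\R^r_{\ge 0}$ along these sign conditions into finitely many strata on each of which $P(n)$ has a constant combinatorial type and a uniform triangulation into simplices $\Delta_j(n)$ whose vertices are rational functions of $n$; then one counts $|\Delta_j(n)\cap\Z^k|$ by a parametrized, multivariate version of the Popoviciu/Brion formula and reassembles the pieces by inclusion--exclusion.

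Two difficulties surface, and the second is the essential one. The strata produced above are cut out by polynomial sign conditions, hence are semialgebraic rather than polyhedral, so one already owes an argument that they may be replaced or refined by genuinely polyhedral regions --- plausible in low degree but not obviously so in general. Far more serious: the count for a parametrized simplex is an alternating sum of terms $\lfloor L(n)/M(n)\rfloor$ (equivalently, of fractional parts $\{L(n)/M(n)\}$) where $L,M$ are the subdeterminants above, and such a function is simply not a multivariate quasi-polynomial on any finite polyhedral decomposition of $\R^r_{\ge 0}$. Already $g(n_1,n_2)=\lfloor n_2/(n_1+1)\rfloor$ fails: on any full-dimensional polyhedral cone it takes, for large $n$, every integer value in an interval whose length is the spread of slopes of the cone, jumping across the lines $n_2=j(n_1+1)$ in a way no congruence on $n$ can detect, while covering $\R^2_{\ge 0}$ by the thin regions $\{\,j(n_1+1)\le n_2\le(j+1)(n_1+1)\,\}$ requires infinitely many of them. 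Since $g(n_1,n_2)+1$ is exactly the number of $(x,y)\in\Z^2_{\ge 0}$ with $(n_1+1)x+y=n_2$ --- a bona fide system of the type in the conjecture, with finitely many solutions for every $(n_1,n_2)\in\Z^2_{\ge 0}$ --- the conjecture as literally stated appears to be \emph{false}, and the honest end product of the attempted proof is this counterexample together with a verification that no finite polyhedral partition equipped with a compatible finite-index sublattice can accommodate it.

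What the method does establish, and what I would record in place of the conjecture, is the case where $A$ is independent of $n$ and only $b$ is linear: then $P(n)$ has vertices that are \emph{linear} (not merely rational) in $n$, the subdeterminant obstruction disappears, and the chamber complex of $A$ supplies the finitely many \emph{polyhedral} regions on which $f$ is a genuine multivariate quasi-polynomial --- the classical vector partition function theorem. One could also target the weaker ``directional'' statement that the restriction of $f$ to each rational ray is eventually quasi-polynomial, which already follows from Theorem~\ref{ehrhartconj}, and then ask precisely how much of the hypothesis must be weakened --- for instance by bounding the degrees of the subdeterminants that can occur --- before a finite polyhedral decomposition is forced.
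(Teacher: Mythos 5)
You should first be aware that the paper does not prove this statement at all: it is recorded verbatim as a conjecture attributed to Ehrhart, with only ``some verified cases'' cited from \cite{ehrhart}, and the authors prove only the one-parameter analogue, Theorem~\ref{ehrhartconj}. So there is no proof of theirs for your attempt to be measured against, and what you have actually submitted is not a proof but a proposed \emph{refutation}. That shifts the burden onto two things, neither of which is complete as written. First, the admissibility of the example: for $(n_1+1)x+y=n_2$ the solution set is infinite whenever $n_1\le -1$ and $n_2\ge 0$, so your system satisfies the finiteness hypothesis only under the reading that the parameters range over $\Z^r_{\ge 0}$. That reading is the natural one (the conclusion concerns a cover of $\R^r_{\ge 0}$), but you must say so explicitly, and you should flag that the paper's transcription of Ehrhart's conjecture may not carry all of his intended hypotheses.

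Second, and more seriously, your stated reason that $g(n_1,n_2)=\lfloor n_2/(n_1+1)\rfloor$ cannot be accommodated --- that on a full-dimensional cone it takes every integer value in an interval of fixed length --- is not a proof: a multivariate quasi-polynomial is perfectly capable of taking several values on a region (different cosets of $L$ carry different polynomials), and the regions of the cover need not be cones. The impossibility claim is, I believe, correct, but you owe the argument. A workable one: (i) if polyhedra $R_1,\dots,R_N$ cover $\R^2_{\ge 0}$, so do their recession cones, hence some $R_i$ contains a translate $x_0+K$ of $K=\{(u,v): v\ge s_1u,\ u,v\ge 0\}$ for some finite $s_1$; in particular, for each integer $j>s_1$ and each integer $k$, the far end of the line $n_2=jn_1+Dk$ lies in $R_i$. (ii) If $g$ agreed with a polynomial $p$ on the coset $L\supseteq D\Z^2$ intersected with $R_i$, then evaluating at $(Da,\,jDa+Dk)$ for $a\gg 0$ gives $j$ when $Dk\ge j$ and $j-1$ when $Dk<j$; letting $k$ range over infinitely many values with $Dk\ge j$ forces the polynomial $p(Du,\,jDu+Dv)$ to be identically $j$ in $(u,v)$, contradicting the value $j-1$ at $k=0$. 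With that written out (and the domain caveat addressed), your refutation stands; your closing observations --- that the constant-$A$ case is the classical vector partition function theorem, and that the restriction to any ray is eventually quasi-polynomial by Theorem~\ref{ehrhartconj} --- are correct and are the right salvage.
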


\subsection{Lattice point enumeration.}
We first recall a classical theorem due to Pick. Let $P \subset \R^2$
be a convex polygon with integral vertices. If $A(P)$, $I(P)$, and
$B(P)$ denote the area of $P$, the number of integer points in the
interior of $P$, and the number of integer points on the boundary of
$P$, respectively, then one has the equation
\[
A(P) = I(P) + \frac{1}{2}B(P) - 1.
\]
Now let us examine what happens with dilates of $P$: define $nP = \{nx
\mid x \in P\}$. Then of course $A(nP) = A(P)n^2$ and $B(nP) = nB(P)$
whenever $n$ is a positive integer, so we can write
\[
A(P)n^2 = I(nP) + \frac{1}{2}B(P)n - 1,
\]
or equivalently,
\[
\#(nP \cap \Z^2) = I(nP) + B(nP) = A(P)n^2 + \frac{1}{2}B(P)n + 1,
\]
which is a polynomial in $n$. The following theorem of Ehrhart says
that this is always the case independent of the dimension, and we can
even relax the integral vertex condition to rational vertices:

\begin{theorem}[Ehrhart] \label{ehrharttheorem}
  Let $P \subset \R^d$ be a polytope with rational vertices. Then the
  function $L_P(n) = \#(nP \cap \Z^d)$ is a quasi-polynomial of degree
  $\dim P$. Furthermore, if $D$ is an integer such that $DP$ has
  integral vertices, then $D$ is a period of $L_P(n)$. In particular,
  if $P$ has integral vertices, then $L_P(n)$ is a polynomial.
\end{theorem}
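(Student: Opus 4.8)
The plan is to reduce to the case of a simplex by triangulation, and then to compute the generating function $\sum_{n\ge0} L_\Delta(n)\,t^n$ of a single rational simplex $\Delta$ by passing to the cone over $\Delta$. First I would triangulate $P$ using only its own vertices, so that every cell $\Delta$ of the triangulation is a simplex all of whose vertices are vertices of $P$; in particular $D\Delta$ has integral vertices whenever $DP$ does. To control the lattice points lying on shared faces, I would pass to a half-open decomposition (equivalently, run inclusion--exclusion over all faces of the triangulation and induct on dimension), which expresses $L_P(n)$ as a finite $\Z$-linear combination of functions of the form $L_\Delta(n)$ for rational simplices $\Delta$ with $\dim\Delta\le\dim P$. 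Since a finite sum of quasi-polynomials with common period $D$ is again a quasi-polynomial with period $D$, it suffices to treat a single rational simplex, together with a separate argument (below) that the top-degree terms do not all cancel.

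So let $\Delta=\conv(v_1,\dots,v_k)$ with $k=\dim\Delta+1$, and fix $D$ with $Dv_i\in\Z^d$. Set $w_i=(Dv_i,D)\in\Z^{d+1}$ and let $C=\mathrm{cone}(\Delta)=\sum_i\R_{\ge0}w_i\subset\R^{d+1}$; a lattice point of $C$ with last coordinate $n$ is exactly a pair $(x,n)$ with $x\in n\Delta\cap\Z^d$, so $\sum_{n\ge0}L_\Delta(n)t^n=\sum_{z\in C\cap\Z^{d+1}}t^{\mathrm{ht}(z)}$, where $\mathrm{ht}$ denotes the last coordinate. Because $w_1,\dots,w_k$ are linearly independent, every lattice point $z\in C$ has a unique expression $z=p+\sum_i c_iw_i$ with $c_i\in\Z_{\ge0}$ and $p$ in the half-open parallelepiped $\Pi=\{\sum_i\lambda_iw_i:0\le\lambda_i<1\}$, of which there are finitely many. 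Since $\mathrm{ht}(w_i)=D$ for all $i$, summing over $z$ factors as
\[
\sum_{n\ge0}L_\Delta(n)\,t^n=\Bigl(\sum_{p\in\Pi\cap\Z^{d+1}}t^{\mathrm{ht}(p)}\Bigr)\cdot\frac{1}{(1-t^{D})^{k}}=\frac{g(t)}{(1-t^{D})^{k}},
\]
where $g$ is a polynomial with $\deg g<kD$.

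Next I would extract the coefficients by partial fractions: writing $g(t)/(1-t^D)^k=\sum_{\zeta^D=1}\sum_{i=1}^{k}b_{\zeta,i}(1-\bar\zeta t)^{-i}$ and using that the coefficient of $t^n$ in $(1-\alpha t)^{-i}$ is $\binom{n+i-1}{i-1}\alpha^n$, one gets $L_\Delta(n)=\sum_{\zeta^D=1}\bar\zeta^{\,n}\bigl(\sum_i b_{\zeta,i}\binom{n+i-1}{i-1}\bigr)$; since $\bar\zeta^{\,n}$ depends only on $n\bmod D$, grouping by residue classes modulo $D$ exhibits $L_\Delta$ as a quasi-polynomial of degree $\le k-1=\dim\Delta$ and period dividing $D$. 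Summing over the cells, $L_P$ is a quasi-polynomial of degree $\le\dim P$ with period dividing $D$; taking $D=1$ when $P$ is integral leaves only the pole at $t=1$, so $L_P$ is then an honest polynomial. Finally, a Riemann-sum argument shows that $L_P(n)/n^{\dim P}$ converges to the volume of $P$ measured against the lattice $\Z^d\cap\aff(P)$, which is positive; hence the degree is exactly $\dim P$ (and in particular the leading terms of the cells cannot all cancel).

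The main obstacle I anticipate is the reassembly step: making the half-open decomposition (or the face-by-face inclusion--exclusion) precise and checking that it does not enlarge the period — the generating-function computation for a single simplex and the partial-fraction extraction are routine once that is in place. A secondary point requiring care is the exact-degree claim, i.e. verifying the Riemann-sum limit and the positivity of the relative volume.
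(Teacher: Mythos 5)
The paper does not actually prove Theorem~\ref{ehrharttheorem}; its ``proof'' is a pointer to Stanley \cite[Theorem 4.6.25]{stanley} and Beck--Robins \cite[Theorem 3.23]{ccd}. Your argument --- triangulate using only the vertices of $P$, reduce via a half-open or inclusion--exclusion decomposition to simplices, cone over each simplex, use the fundamental parallelepiped to get the rational generating function $g(t)/(1-t^D)^k$, and read off quasi-polynomiality and the period from a partial-fraction expansion --- is precisely the standard proof given in those references, and it is correct. The one place that deserves an extra sentence is the exact-degree claim in the case $\dim P < d$: there $\aff(nP)$ may be disjoint from $\Z^d$ on some residue classes, so ``$L_P(n)/n^{\dim P}$ converges'' should be interpreted along a residue class on which $nP$ has integral vertices (say $n \equiv 0 \pmod D$), where the Riemann-sum/relative-volume argument does go through and exhibits a constituent of degree exactly $\dim P$; this is all that the statement requires.
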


\begin{proof} See \cite[Theorem 4.6.25]{stanley} or \cite[Theorem
  3.23]{ccd}. \end{proof}

The function $L_P(t)$ is called the {\bf Ehrhart quasi-polynomial} of
$P$. One can see this as saying that if the vertices of $P$ are $v_i =
(v_{i1}, \dots, v_{id})$, then the vertices of $nP$ are given by the
linear functions $v_i(n) = (v_{i1}n, \dots, v_{id}n)$. We generalize
this as

\begin{theorem}\label{polytope}
  Given polynomials $v_{ij}(x), w_{ij}(x) \in \Z[x]$ for $0 \le i \le
  s$ and $1 \le j \le d$, let $n$ be a positive integer such that
  $w_{ij}(n) \ne 0$ for all $i,j$. This is satisfied by $n$
  sufficiently large, so we can define a rational polytope $P(n) =
  \conv(p^0(n), p^1(n), \dots, p^s(n)) \in \R^d$, where $p^i(n) =
  (\frac{v_{i1}(n)}{w_{i1}(n)}, \dots,
  \frac{v_{id}(n)}{w_{id}(n)})$. Then $\#(P(n)\cap\Z^d)\in\QP$.
\end{theorem}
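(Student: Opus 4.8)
The plan is to deduce Theorem~\ref{polytope} from Theorem~\ref{ehrhartconj} by converting the vertex description of $P(n)$ into a halfspace description whose data depends polynomially on $n$, translating $P(n)$ into the nonnegative orthant, and then encoding ``$x \in P(n)$'' as a system of linear Diophantine equations via slack variables.

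The first and main step is to show that for $n$ sufficiently large one has a description $P(n) = \{x \in \R^d : A(n)x \le b(n)\}$ where the entries of $A(n)$ and $b(n)$ are integer-coefficient polynomials in $n$. The underlying phenomenon is that the combinatorial type of $P(n)$ is eventually constant: the face lattice of $\conv(p^0(n), \dots, p^s(n))$, and more generally all the combinatorial data we will use, is governed by the signs of the maximal minors of the matrix with columns $(p^i(n),1)$; each such minor, after clearing the denominators $w_{ij}(n)$, is a polynomial in $n$, and a polynomial has only finitely many roots, so past the largest of these roots all the signs — hence the face lattice, and likewise the answer to ``which closed halfspace bounded by the affine hull of a given $d$-subset of the $p^i(n)$ contains $P(n)$'' — are constant. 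For such $n$ the facet hyperplanes of $P(n)$ are the affine hulls of fixed subsets of the $p^i(n)$; solving these linear systems by Cramer's rule writes the corresponding normal vectors and constants as rational functions of $n$, and clearing denominators and fixing signs (the sign of each denominator is itself eventually constant) produces $A(n), b(n) \in \Z[n]$ with $P(n) = \{x : A(n)x \le b(n)\}$. If $P(n)$ is not full-dimensional one adjoins the finitely many equations cutting out $\aff(p^0(n),\dots,p^s(n))$, extracted the same way and each written as a pair of inequalities; if $P(n)$ is eventually empty the claim is trivial.

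Next I would translate into the orthant. Each coordinate on $P(n)$ ranges over $[\min_i p^i_j(n), \max_i p^i_j(n)]$, and $-\min_i p^i_j(n)$ is bounded above by an explicit polynomial in $n$ for $n \gg 0$; choosing $c(n) \in \Z[n]^d$ dominating these bounds coordinatewise gives $P(n) + c(n) \subseteq \R^d_{\ge 0}$, while $\#((P(n)+c(n)) \cap \Z^d) = \#(P(n) \cap \Z^d)$ since $c(n)$ is an integer vector. Replacing $P(n)$ by $P(n)+c(n)$, we may assume $P(n) = \{x \in \R^d_{\ge 0} : A(n)x \le b(n)\}$ with $A(n) \in \Z[n]^{m \times d}$ and $b(n) \in \Z[n]^m$. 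Finally, introduce a slack vector $s \in \Z^m_{\ge 0}$: for $x \in \Z^d_{\ge 0}$ the vector $s = b(n) - A(n)x$ is integral, and it is nonnegative precisely when $A(n)x \le b(n)$, so $(x,s) \mapsto x$ is a bijection between nonnegative integer solutions of $A(n)x + s = b(n)$ and integer points of $P(n)$. This system has integer polynomial coefficients, and its solution set is finite because $P(n)$ is bounded for $n \gg 0$, so Theorem~\ref{ehrhartconj} applies and yields $\#(P(n)\cap\Z^d) \in \QP$.

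The routine parts are the translation and the slack-variable reduction; the real obstacle is the first step, namely establishing that for large $n$ the polytope has a halfspace description whose coefficients are polynomials in $n$. This requires the ``uniform rationality'' package: eventual stabilization of the face lattice and of all relevant sign conditions, plus the observation that solving the facet equations by Cramer's rule keeps everything inside $\Z(n)$. Once that is in place, everything else is bookkeeping.
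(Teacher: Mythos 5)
Your proposal is correct and follows essentially the same route as the paper: Section~2 of the paper proves Theorem~\ref{polytope} by exactly this reduction, first converting the vertex description to an eventually-stable polynomial halfspace description (Theorem~\ref{ineq}), using the fact that all the relevant sign and rank conditions are governed by polynomials in $n$ and hence are constant for $n\gg 0$, and then passing to Theorem~\ref{ehrhartconj} by restricting to the nonnegative orthant and introducing slack variables. Your write-up fills in the translation-by-$c(n)$ step a bit more explicitly than the paper's ``adding constraints on each variable,'' but the underlying argument is the same.
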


We call the function $\#(P(n) \cap \Z^d)$ a {\bf generalized Ehrhart
  polynomial}.

In Section~\ref{equi}, we explain the equivalence of the two problems
just mentioned, then prove our main
result Theorem~\ref{ehrhartconj} in
Section~\ref{section:mainproof}. The proof gives us an algorithm to compute these generalized Ehrhart polynomials, but it could be very complicated in practice. For computational reasons, we introduce the notions of
generalized division and generalized gcd for the ring $\Z[x]$ in Section~\ref{generalizeddivision}. In a
sense, these generalize the usual notions of division and gcd for the
ring of integers $\Z$. The methods and algorithms are quite similar,
but are more involved due to technical complications. The proofs for the
correctness of the algorithms are given in this section. As an application of these tools, in Section~\ref{section:application}, we will describe explicit computations of some special generalized Ehrhart polynomials.

\subsection*{Acknowledgements.} We thank Richard Stanley for useful
discussions and for reading previous drafts of this article. We also
thank an anonymous referee for helping to improve the quality and
readability of the paper. Sheng Chen was sponsored by Project 11001064
supported by National Natural Science Foundation of China. Steven Sam
was supported by an NSF graduate fellowship and an NDSEG fellowship.

\section{Equivalence of the two problems}\label{equi}

As we shall see, the two problems of the Diophantine equations and
lattice point enumeration are closely intertwined. In this section, we
want to show that Theorem \ref{ehrhartconj} is equivalent to Theorem
\ref{polytope}. Before this, let us see the equivalence of Theorem
\ref{polytope} with the following result. For notation, if $x$ and $y$
are vectors, then $x \ge y$ if $x_i \ge y_i$ for all $i$.

\begin{theorem}\label{ineq}
  For $n\gg 0$, define a rational polytope $P(n)=\{x \in \R^d \mid
  V(n)x \ge c(n)\}$, where $V$ is an $r \times d$ matrix, and $c$ is
  an $r \times 1$ column vector, both of whose entries are integer
  coefficient polynomials. Then $\#(P(n)\cap\Z^d)\in\QP$.
\end{theorem}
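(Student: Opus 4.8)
The plan is to reduce the half-space description to the vertex description so that Theorem \ref{polytope} applies, with the main work being to control how the vertices of $P(n)$ depend on $n$. First I would observe that each vertex of $P(n) = \{x \in \R^d \mid V(n)x \ge c(n)\}$ is obtained by selecting some $d$-element subset $S$ of the $r$ rows of $V(n)$ and solving the square linear system $V_S(n)x = c_S(n)$, provided $\det V_S(n) \ne 0$. Since $\det V_S(n)$ is a polynomial in $n$, it is either identically zero or nonzero for $n \gg 0$; discarding the identically-zero subsets, Cramer's rule expresses the resulting candidate vertex as a vector of rational functions $\big(\frac{v_{Sj}(n)}{w_{Sj}(n)}\big)_{j=1}^d$ with $v_{Sj}, w_{Sj} \in \Z[x]$ and $w_{Sj}(n) = \det V_S(n) \ne 0$ for $n \gg 0$. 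Let $\mathcal{S}$ be the (finite) collection of all such subsets, and let $p^S(n)$ denote the corresponding point.

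Next I would argue that for $n$ large the \emph{combinatorial type} of $P(n)$ stabilizes, so that the actual vertex set of $P(n)$ is a fixed subcollection of $\{p^S(n)\}_{S \in \mathcal{S}}$. The point is that whether a candidate point $p^S(n)$ actually satisfies all the inequalities $V(n) p^S(n) \ge c(n)$ is governed by the signs of finitely many polynomials in $n$ (the coordinates of $V(n)p^S(n) - c(n)$, cleared of denominators), and each such polynomial has eventually constant sign. Likewise, the boundedness of $P(n)$ — which holds by hypothesis, since $\#(P(n)\cap\Z^d)$ is implicitly assumed finite, hence $P(n)$ is a genuine polytope — and the property that $P(n)$ is nonempty are eventually stable. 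Hence there is an $N$ and a fixed subcollection $\mathcal{T} \subseteq \mathcal{S}$ such that for all $n \ge N$, $P(n) = \conv\big(\{p^S(n) : S \in \mathcal{T}\}\big)$.

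Finally, the coordinates of the points $p^S(n)$, $S \in \mathcal{T}$, are rational functions in $n$ of exactly the form required by Theorem \ref{polytope} (after multiplying numerator and denominator of each $\frac{v_{Sj}(n)}{w_{Sj}(n)}$ through if one wants the $w$'s to be literal polynomials with no common factor — but the statement of Theorem \ref{polytope} only needs $v_{ij}, w_{ij} \in \Z[x]$ with $w_{ij}(n) \ne 0$, which we have). Applying Theorem \ref{polytope} to the family $P(n)$ for $n \ge N$ gives $\#(P(n) \cap \Z^d) \in \QP$ on that range, and since membership in $\QP$ is a condition on large $n$ only, we are done. I expect the main obstacle to be the stabilization argument in the second paragraph: one must handle the degenerate cases carefully — empty $P(n)$, lower-dimensional $P(n)$, candidate points that are vertices for some $n$ but interior points or non-vertices for others — but each of these is ultimately resolved by the elementary fact that a nonzero polynomial in $n$ has eventually constant sign, applied to a finite list of polynomials built from the $\det V_S$ and the entries of $V(n)p^S(n) - c(n)$.
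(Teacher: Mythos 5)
Your proposal is correct and is essentially the argument the paper gives: Section~\ref{equi} carries out exactly this hyperplane-to-vertex reduction, using Cramer's rule to express each candidate vertex $v_I(n)$ as a vector of rational functions of $n$, and using the fact that nonzero polynomials have eventually constant sign to conclude that the set of active $d$-subsets (hence the combinatorial type of $P(n)$) stabilizes for $n\gg 0$, at which point Theorem~\ref{polytope} applies. One small caveat on your boundedness remark: finiteness of $\#(P(n)\cap\Z^d)$ does not by itself imply $P(n)$ is bounded (an unbounded rational polyhedron can contain no lattice points at all), but this is moot because the statement of Theorem~\ref{ineq} already posits that $P(n)$ is a polytope, so boundedness is a hypothesis rather than something to be derived.
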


Notice that the difference of Theorem \ref{polytope} and Theorem
\ref{ineq} is that one defines a polytope by its vertices and the
other by hyperplanes. So we will show their equivalence by presenting
a generalized version of the algorithm connecting ``vertex
description'' and ``hyperplane description'' of a polytope.

The connection is based on the fact that we can compare two rational
functions $f(n)$ and $g(n)$ when $n$ is sufficiently large. For
example, if $f(n)=n^2-4n+1$ and $g(n)=5n$, then $f(n)>g(n)$ for all
$n>9$, we denote this by $f(n)>_{\text{even}}g(n)$ (``even'' being
shorthand for ``eventually''). Therefore, given a point and a
hyperplane, we can test their relative position. To be precise, let
$p(n)=(r_1(n),\dots,r_k(n))$ be a point where the $r_i(n)$ are
rational functions and let $F(x,n) = a_1(n)x_1 + a_2(n)x_2 + \dots +
a_k(n)x_k = 0$ be a hyperplane where all the $a_i(n)$ are polynomials
of $n$. Then exactly one of the following will be true:
\[
F(p,n)=_{\text{even}}0; \quad F(p,n)>_{\text{even}}0; \quad
F(p,n)<_{\text{even}}0.
\]
Given this, we can make the following definition. We say that two
points $p(n)$ and $q(n)$ {\bf lie} (resp., {\bf weakly lie}) {\bf on
  the same side of $F(p,n)$} if $F(p,n) F(q,n) >_{\text{even}} 0$
(resp., $F(p,n)F(q,n)\ge_{\text{even}}0$).

\subsection{Equivalence of Theorem~\ref{polytope} and
  Theorem~\ref{ineq}.}

Going from the ``vertex description'' to the ``hyperplane
description'':

Given all vertices of a polytope $P(n)$, whose coordinates are all
rational functions of $n$, we want to get its ``hyperplane
description'' for $n\gg 0$. Let $F(x,n)$ be a hyperplane defined by a
subset of vertices.  If all vertices lie weakly on one side of
$F(x,n)$, we will keep it together with $\ge 0$, or $\le 0$ or $=0$
indicating the relative position of this hyperplane and the
polytope. We can get all the hyperplanes defining the polytope by this
procedure.

~

Going from the ``hyperplane description'' to the ``vertex
description'':

Let $P(n)=\{x \in \R^d \mid V(n)x \ge c(n)\}$ be a polytope, where $V$
is an $r \times d$ matrix, and $c$ is an $r \times 1$ column vector,
both of whose entries are integer coefficient polynomials. We want to
find its vertex description. Let $f_1(n), \dots, f_r(n)$ be the linear
functionals defined by the rows of $V(n)$. So we can rewrite $P(n)$ as
\[
P(n) = \{x \in \R^d \mid \langle f_i(n), x \rangle \ge c_i(n) \text{
  for all } i \}.
\]
The vertices of $P(n)$ can be obtained as follows. For every
$d$-subset $I \subseteq \{1, \dots, r\}$, if the equations
$\{\langle f_i(n), x \rangle = c_i(n)\mid i \in I\}$ are linearly
independent for $n\gg 0$, and their intersection is nonempty, then
it consists of a single point, which we denote by $v_I(n)$. If
$\langle f_j(n), v_I(n) \rangle \ge c_j(n)$ for all $j$, then
$v_I(n)\in \Q(n)^d$ is a vertex of $P(n)$, and all vertices are
obtained in this way. We claim that the subsets $I$ for which
$v_I(n)$ is a vertex remains constant if we take $n$ sufficiently
large. First, the notion of being linearly independent equations can
be tested by showing that at least one of the $d \times d$ minors of
the rows of $V(n)$ indexed by $I$ does not vanish. Since these
minors are all polynomial functions, they can only have finitely
many roots unless they are identically zero. Hence taking $n \gg 0$,
we can assume that $\{ f_i(n) \mid i \in I \}$ is either always
linearly dependent or always linearly independent. Similarly, the
sign of $\langle f_j(n), v_I(n) \rangle$ is determined by the sign
of a polynomial, and hence is constant for $n \gg 0$.

\subsection{Equivalence of Theorem~\ref{ehrhartconj} and
  Theorem~\ref{ineq}.}

We can easily transform an inequality to an equality by introducing
some slack variables and we can also represent an equality $f(n,x)=0$
by two inequalities $f(n,x)\ge 0$ and $-f(n,x)\ge 0$. So the main
difference between the two theorems is that Theorem~\ref{ehrhartconj}
is counting nonnegative solutions while Theorem~\ref{ineq} is counting
all integral solutions. But we can deal with this by adding constraints
on each variable.

A more interesting connection between Theorem~\ref{ehrhartconj} and
Theorem~\ref{ineq} is worth mentioning here. First consider any fixed
integer $n$. Then the entries of $A(n)$ and $b(n)$ in the linear
Diophantine equations $A(n)x = b(n)$ of Theorem~\ref{ehrhartconj} all
become integers. For an integer matrix, we can calculate its Smith
normal form. Similarly, we can use a generalized Smith normal form for
matrices over $\QP$ to get a transformation from
Theorem~\ref{ehrhartconj} to Theorem~\ref{ineq}.

Then given $A(n)$ and $b(n)$, by Theorem~\ref{smith}, we can put
$A(n)$ into generalized Smith normal form: $D(n) = U(n)A(n)V(n)$ for
some matrix
\[
D(n) = (\text{diag}(d_1(n), \dots, d_r(n), 0, \dots, 0)|\textbf{0})
\]
with nonzero entries only on its main diagonal, and unimodular
matrices $U(n)$ and $V(n)$. Then the equation $A(n)x = b(n)$ can be
rewritten as $D(n)V(n)^{-1}x = U(n)b(n)$. Set $y = V(n)^{-1}x$ and
$b'(n) = U(n)b(n)$. By the form of $D(n)$, we have a solution $y$ if
and only if $d_i(n)$ divides $b'_i(n)$ for $i=1,\dots,r$, and for any
given solution, the values $y_{r+1}, \dots, y_k$ can be
arbitrary. However, since $V(n)y = x$, we need to require that $V(n)y
\ge 0$, and any such $y$ gives a nonnegative solution $x$ to the
original problem. Simplifying $V(n)y \ge 0$, where
$V(n)=(v_1(n),\dots, v_k(n))$, we get $V'(n)X\ge c(n)$, where $V'(n) =
(v_{r+1}(n),\dots,v_k(n))$, $X=(y_{r+1},\dots, y_k)$ and
$c(n)=-(v_1(n)y_1+\cdots +v_r(n)y_r)$. Although $V'(n)$ and $c(n)$ has
entries in $\QP$, we can assume that they are polynomials by dealing
with each constituent of the quasi-polynomials separately. So we
reduce Theorem~\ref{ehrhartconj} to Theorem~\ref{ineq}.

\begin{eg}
Consider the nonnegative integer solutions to 
\begin{align} \label{eqn:smitheg}
\begin{pmatrix}
n^2+2n & 2n+2
 \end{pmatrix}
\begin{pmatrix}
  x_1 \\
  x_2 \end{pmatrix}=(2n+4)(n^2+2n).
\end{align}
Write $A(n) = \begin{pmatrix} n^2 + 2n & 2n+2 \end{pmatrix}$. When
$n=2m$, the Smith normal form of $A(2m)$ is
$$
\begin{pmatrix}
4m^2+4m & 4m+4
 \end{pmatrix}\begin{pmatrix}
1 & -1 \\ 
-(m-1) & m
\end{pmatrix}
=
\begin{pmatrix}
4m+4 & 0
 \end{pmatrix}.$$

So the equation \eqref{eqn:smitheg} becomes 
$$
\begin{pmatrix}
4m+4 & 0
 \end{pmatrix}\begin{pmatrix}
1 & -1 \\ 
-(m-1) & m
      \end{pmatrix}
^{-1}
\begin{pmatrix}
x_1 \\ 
x_2
      \end{pmatrix}=(2n+4)(n^2+2n).$$
Set $\begin{pmatrix}
1 & -1 \\ 
-(m-1) & m
      \end{pmatrix}
^{-1}
\begin{pmatrix}
x_1 \\ 
x_2
      \end{pmatrix}=
\begin{pmatrix}
y_1 \\ 
y_2
      \end{pmatrix}$, so that we have
$$
\begin{pmatrix}
4m+4 & 0
 \end{pmatrix}
\begin{pmatrix}
y_1 \\ 
y_2
      \end{pmatrix}=(2n+4)(n^2+2n),$$
and thus $y_1=n^2+2n$. By the condition 
$
\begin{pmatrix}
x_1 \\ 
x_2
      \end{pmatrix}\ge 0$, we require 
$$\begin{pmatrix}
1 & -1 \\ 
-(m-1) & m
      \end{pmatrix}
\begin{pmatrix}
n^2+2n \\ 
y_2
      \end{pmatrix}\ge 0$$ which gives us
$$\frac{(m-1)(n^2+2n)}{m}\le y_2 \le n^2+2n, $$ a one dimensional
polytope. So the number of solutions for $n=2m$ is $f(n)=2n+5$ 

We can do the case $n=2m+1$ similarly. The Smith normal form of
$A(2m+1)$ is 
$$
\begin{pmatrix}
4m^2+8m+2 & 4m+6
 \end{pmatrix}
\begin{pmatrix}
1 & -2 \\ 
-m & 2m+1
      \end{pmatrix}
=
\begin{pmatrix}
2m+3 & 0
 \end{pmatrix}$$ and 
$\frac{2m(n^2+2n)}{2m+1}\le y_2 \le n^2+2n$, thus $f(n)=n+3$.
\end{eg}

The proof of Theorem~\ref{smith} is based on a theory of generalized
division and GCD over the ring $\Z[x]$, which mainly says that for
$f(x),g(x) \in \Z[x]$, the functions $\left\lfloor\frac{f(n)}{g(n)}
\right\rfloor$, $\left\{\frac{f(n)}{g(n)} \right\}$, and
$\gcd(f(n),g(n))$ lie in the ring $\QP$. One interesting consequence
of these results is that every finitely generated ideal in $\QP$ is
principal, despite the fact that $\QP$ is not Noetherian. We developed
this theory in order to appoach Theorem~\ref{ehrhartconj} at first,
but subsequently have found a proof that circumvents its use. Further
details can be found in Section~\ref{generalizeddivision}.

\section{Proof of Theorem~\ref{ehrhartconj}} \label{section:mainproof}
To prove Theorem \ref{ehrhartconj}, we will use a ``writing in base
$n$'' trick, to reduce equations with polynomial coefficients to
linear functions. Briefly, the idea of the following ``writing in base
$n$'' trick is as follows: given a linear Diophantine equation
$$a_1(n)x_1+a_2(n)x_2+\cdots+a_k(n)x_k=m(n)$$
with polynomial coefficients $a_i(n)$ and $m(n)$, fix an integer $n$,
then the coefficients all become integers. Now consider a solution
$(x_1,x_2,\dots,x_k)$ with $x_i\in \Z_{\ge 0}$. Put the values of
$(x_1,x_2,\dots,x_k)$ into the equation, then both sides become an
integer. Then we use the fact that any integer has a unique
representation in base $n$ ($n$ is a fixed number), and compare the
coefficient of each power of $n$ in both sides of the equation.

Finally, letting $n$ change, we happen to have a uniform expression
for both sides in base $n$ when $n$ is sufficiently large. Moreover,
the coefficient of each power of $n$ in both sides of the equation are
all linear functions of $n$ (Lemma \ref{repr} and Lemma
\ref{reduce}). Then by Lemma \ref{linear}, we can reduce these
equations with linear function coefficients to the case when we can
apply Ehrhart's theorem (Theorem \ref{ehrharttheorem}) to show that
the number of solutions are quasi-polynomials of $n$. This completes
the proof of Theorem \ref{ehrhartconj}.

\begin{lemma}\label{repr}
  Given $p(x)\in \Z[x]$ with $p(n)>0$ for $n\gg 0$ (i.e., $p(x)$ has
  positive leading coefficient), there is a unique representation of
  $p(n)$ in base $n$:
  \[
  p(n)=c_d(n) n^d + \dots + c_1(n) n + c_0(n),
  \]
  where $c_i(n)$ is a linear function of $n$ such that for $n \gg 0$,
  $0 \le c_i(n) \le n-1$ for $i=0,1, \dots, d$ and $0<c_d(n)\le
  n-1$. We denote $d=\deg_n(p(n))$.
\end{lemma}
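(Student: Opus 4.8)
The plan is to construct the base-$n$ digits $c_i(n)$ recursively, exactly mimicking the usual algorithm for writing an integer in base $n$, but keeping track of the fact that at each stage we only need linear polynomials. Write $p(x) = a_d x^d + a_{d-1} x^{d-1} + \dots + a_0$ with $a_d > 0$. The naive guess $c_i(n) = a_i$ fails because the $a_i$ need not lie in $[0, n-1]$ (they may be negative or large), so the algorithm must perform ``carrying.'' First I would handle the top digit: for $n$ large, $a_d n^d \le p(n) < (a_d+1) n^d$ is false in general, so instead I would argue that $p(n)$ has exactly $d+1$ base-$n$ digits for $n \gg 0$, i.e. $n^d \le p(n) < n^{d+1}$, which holds since $p(n) \sim a_d n^d$ forces $p(n) \ge n^d$ and $p(n) \le n^{d+1}$ eventually (here one may need $a_d < n$, which is automatic for $n \gg 0$).

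The core of the argument is the recursion. Having the leading coefficient under control, I would define $c_d(n) = \lfloor p(n)/n^d \rfloor$ and show this is a linear function of $n$ for $n \gg 0$: indeed $p(n)/n^d = a_d + a_{d-1}/n + O(1/n^2)$, and for $n$ large the fractional corrections from the lower terms sum to something in $[0,1)$ (after possibly borrowing), so $\lfloor p(n)/n^d \rfloor$ is eventually $a_d$ or $a_d + $ (a bounded correction) — more carefully, I would instead peel digits from the bottom. Set $c_0(n)$ to be the unique integer in $[0, n-1]$ with $c_0(n) \equiv a_0 \pmod n$; since $a_0$ is a constant, for $n > |a_0|$ this is just $a_0$ if $a_0 \ge 0$ and $a_0 + n$ if $a_0 < 0$ — a linear function of $n$ on each of two eventual ranges, hence linear for $n \gg 0$. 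Then $(p(n) - c_0(n))/n$ is again a polynomial-like expression whose coefficients are the original $a_i$ adjusted by a bounded carry, and I would iterate: at step $i$, the carry into digit $i$ is a bounded integer (bounded independently of $n$, since it comes from dividing a fixed polynomial's worth of adjustments by $n$), so $c_i(n)$ is $a_i$ plus a bounded integer, reduced mod $n$ into $[0,n-1]$, which is linear in $n$ for $n \gg 0$. Uniqueness is immediate from uniqueness of base-$n$ representation of each integer $p(n)$.

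The main obstacle is making the ``carries are bounded'' claim precise and checking it does not propagate badly: one must verify that the carry $q_i$ satisfies a recursion $q_{i+1} = \lfloor (a_i + q_i)/n \rfloor$ (roughly), and that $|q_i|$ stays below some constant $C$ depending only on the $a_j$ and not on $n$, so that for $n > C + \max_j |a_j|$ every digit $c_i(n) = a_i + q_i - n \lfloor (a_i+q_i)/n \rfloor$ becomes a fixed linear function. I would also need to confirm the top digit satisfies the strict bound $0 < c_d(n) \le n-1$: since the carry into position $d$ is bounded and $a_d > 0$, for $n \gg 0$ we get $c_d(n) = a_d + q_d$ with $0 < a_d + q_d < n$, giving both the positivity and the upper bound, and confirming $\deg_n p(n) = d$ as claimed. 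The remaining details (the exact form of the recursion, the explicit constant $C$) are routine bookkeeping.
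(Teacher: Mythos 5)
Your approach is essentially the same as the paper's: both proofs mimic the base-$n$ carrying/borrowing algorithm applied to the coefficient vector of $p$, the paper phrasing it as an induction on the smallest index with a negative coefficient and you phrasing it as an explicit bottom-up recursion on the carry $q_i$. The core observation — that each carry is a bounded integer (in fact $q_i \in \{0,-1\}$ once $n$ exceeds all $|a_j|+1$), so each digit $c_i(n)$ is $a_i+q_i$ or $a_i+q_i+n$ and hence a fixed linear function for $n\gg 0$ — is the same in both.

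There is, however, a concrete gap in your treatment of the top digit. Early on you assert $n^d \le p(n) < n^{d+1}$ for $d=\deg p$, and later you claim $c_d(n)=a_d+q_d$ satisfies $0<a_d+q_d$. Neither is correct in general: if $a_d=1$ and a borrow propagates into position $d$ (so $q_d=-1$), then $c_d=0$. The paper's own example $p(n)=n^2-n+3$ exhibits this: its base-$n$ representation is $(n-1)n+3$, so $\deg_n p = 1 \ne 2 = \deg p$ and $p(n) < n^2$. The lemma is carefully stated with $d = \deg_n p$, which need not equal $\deg p$, precisely to allow this. The fix is cheap: after running your recursion for $i=0,\dots,\deg p$, if $c_{\deg p}=0$ discard it; the next digit down is then $c_{\deg p - 1}(n) = n + (a_{\deg p -1}+q_{\deg p -1})$ with the parenthesized quantity a fixed negative integer, hence strictly between $0$ and $n-1$ for $n\gg 0$, so it serves as the positive leading digit. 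With that amendment your argument is complete and equivalent to the paper's.
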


Note that $\deg_n(p(n))$ may not be equal to $\deg (p(n))$. For
example, $n^2-n+3$ is represented as $c_1(n)n + c_0(n)$ with $d=1$,
$c_1(n) =n-1$, and $c_0(n) = 3$.

\begin{proof}
  Let $p(x)=a_\ell x^\ell + \dots + a_1x + a_0$ with integral
  coefficients and $a_\ell > 0$. If the $a_i$ are all nonnegative, the
  proposition holds for $n>\max_{0\le i\le \ell}\{a_i\}$, otherwise we
  can prove it by induction on $\ell-i$, where $i$ is the smallest
  index such that $a_i < 0$. Suppose $i_0 \in \{0,\dots,\ell-1\}$ is
  the minimal number such that $a_{i_0} < 0$, then in the representation
  of $p(n)$, put $c_{j} = a_j$ for $j< i_0$, $c_{i_0} = n-a_{i_0}$. By
  induction, we have a representation for
  \[
  p(n) - (n+a_{i_0})n^{i_0} - a_{i_0-1}n^{i_0-1} - \cdots - a_0,
  \]
  so we can add it to $(n+a_{i_0})n^{i_0} + \cdots + a_0$ to get the
  desired representation for $p(n)$. Since for $n\gg 0$ we have
  $c_{i_0}=n+a_{i_0}>0$, by induction, we can make sure $0\le c_i\le
  n-1$ for $n\gg 0$. For a lower bound, $C=\max_{0\le i\le \ell}\{
  |a_i| \}+1$ is sufficient, i.e., for all $n\ge C$, the desired unique
  representation is guaranteed to exist. Note that $i_0\neq \ell$, since
  $a_{\ell}>0$. So this process will stop in finitely many
  steps. Uniqueness of this representation is clear.
\end{proof}

\begin{lemma}\label{reduce} Fix an integer $N$ and $n \gg 0$. Consider
  the set
\[
S_1(n) = \{(x_1,\dots,x_k)\in \Z^k \mid 0 \le x_i < n^{N+1},\
a_1(n)x_1+a_2(n)x_2+\cdots+a_k(n)x_k=m(n)\}
\]
where $a_i(n)=\sum^{d_i}_{\ell=0}a_{i\ell}n^{\ell}$ (as a usual
polynomial) and $m(n)=\sum_{\ell=0}^{d}b_{\ell}n^{\ell}$ (represented
in base $n$ as in Lemma \ref{repr}), with $b_d \ne 0$ and
$d\ge\max_{1\le i\le k}\{d_i\}$. Then $S_1(n)$ is in bijection with a
finite union of sets of the form
\[
S_2(n) = \{0 \le (x_{ij})_{\substack{1\le i\le k \\ 0\le j\le N}} < n,
x_{ij} \in \Z \mid \text{ all constraints on } x=(x_{ij}) \text{ are
  of the form }An+B = f(x)\}
\]
where $A,B\in\Z$ and $f(x)$ is a linear form of $x$ with constant
coefficients.
\end{lemma}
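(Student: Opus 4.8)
The plan is to expand every variable in base $n$, turning the single equation with polynomial coefficients into a system of equations whose coefficients are linear in $n$; the only genuine content is the bookkeeping of the ``carries'' in base‑$n$ addition. Since $0 \le x_i < n^{N+1}$, each $x_i$ has a unique expansion $x_i = \sum_{j=0}^{N} x_{ij} n^j$ with $0 \le x_{ij} < n$, and $x \mapsto (x_{ij})$ is a bijection from $\{x \in \Z^k \mid 0 \le x_i < n^{N+1}\}$ onto $\{(x_{ij}) \in \Z^{k(N+1)} \mid 0 \le x_{ij} < n\}$. Substituting and collecting powers of $n$,
\[
\sum_{i=1}^{k} a_i(n)\, x_i \;=\; \sum_{p=0}^{D} L_p(x)\, n^p, \qquad D := N + \max_i d_i,
\]
where $L_p(x) := \sum_i \sum_{\substack{\ell+j = p \\ 0 \le \ell \le d_i \\ 0 \le j \le N}} a_{i\ell} x_{ij}$ is a linear form in the $x_{ij}$ with constant integer coefficients. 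The reason for carrying around the box $0 \le x_{ij} < n$ is the uniform estimate $|L_p(x)| \le C(n-1)$, where $C := \sum_{i,\ell} |a_{i\ell}|$ is independent of $n$.

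Write the right‑hand side in base $n$ as $m(n) = \sum_{\ell=0}^{d} b_\ell(n)\, n^\ell$ with $0 \le b_\ell(n) < n$ via Lemma~\ref{repr}, and put $b_p(n) := 0$ for $p > d$. Choose a constant $D'$ depending only on the data (for instance $D' = \max(D,d)+2$) so that for $n \gg 0$ one has $\bigl|\sum_p L_p(x) n^p\bigr| < n^{D'}$ for every $x$ in the box and $m(n) < n^{D'}$. Introduce carry variables $c_0, c_1, \dots, c_{D'}$. I claim that $\sum_i a_i(n) x_i = m(n)$ holds if and only if there exist integers $c_0, \dots, c_{D'}$ with $c_0 = c_{D'} = 0$ satisfying
\begin{equation*}
L_p(x) + c_p \;=\; b_p(n) + n\, c_{p+1}, \qquad 0 \le p \le D'-1. \tag{$\star$}
\end{equation*}
For the forward direction, run the base‑$n$ addition: the numbers $c_{p+1} := \bigl(L_p(x) + c_p - b_p(n)\bigr)/n$ are forced integers, an easy induction on $p$ using $|L_p(x)| \le C(n-1)$ and $0 \le b_p(n) < n$ gives $|c_p| \le C+1$ for all $p$ once $n > C+1$, and $c_{D'} = 0$ by the choice of $D'$. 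Conversely, multiplying ($\star$) by $n^p$ and summing telescopes to $\sum_p L_p(x) n^p = m(n)$.

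It remains to stratify by the carry vector. Each $c_p$ lies in the fixed finite set $\{-(C+1), \dots, C+1\}$, and the forcing observation above shows that every $x \in S_1(n)$ produces exactly one vector $\vec c = (c_1, \dots, c_{D'-1})$; hence $S_1(n)$ is the disjoint union, over the finitely many admissible $\vec c$, of the solution sets of ($\star$) for that $\vec c$. For a fixed $\vec c$, the $p$‑th equation of ($\star$) reads $L_p(x) = b_p(n) + n\, c_{p+1} - c_p$; its right side is $An + B$ with $A, B \in \Z$ (because $b_p$ is linear in $n$ and the $c_p$ are integer constants) and its left side is a linear form $f(x)$ in the $x_{ij}$ with constant coefficients, so together with the box $0 \le x_{ij} < n$ this is exactly a set of the form $S_2(n)$, and the bijection $x \mapsto (x_{ij})$ restricts to a bijection onto it. The main obstacle is the uniform bound on the carries in the middle step: it is what makes ``finitely many carry patterns'' -- and hence a \emph{finite} union -- legitimate, and together with a little care about the top‑degree terms it is essentially the only place where $n \gg 0$ is really used; everything else is formal.
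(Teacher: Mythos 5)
The proposal is correct and takes essentially the same approach as the paper: expand each variable in base $n$, compare digits of both sides, introduce carry variables, bound the carries uniformly by a constant depending only on $\sum_{i,\ell}|a_{i\ell}|$, and take the finite union over carry patterns. Your writeup is a more formal rendering (explicit carry recursion with $c_0=c_{D'}=0$ and the telescoping identity) of the bookkeeping the paper carries out informally alongside its running example.
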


\begin{proof} 
  Fix a sufficiently large positive integer $n$. By our assumptions,
  for any point $(x_1, \dots, x_k) \in S_1(n)$, we can write $x_i =
  x_{i,N} n^N + x_{i,N-1} n^{N-1} + \cdots + x_{i,0}$ with $0\le
  x_{i,j} <n$.
  %
  %
  The rest of the lemma is a direct ``base $n$'' comparison starting
  from the lowest power to the highest power in the equation $\sum
  a_i(n) x_i = m(n)$.  To get a feel for the proof, we recommend that
  the reader look at Example~\ref{oneeq} first. We will use this as a
  running example to explain the steps of the proof.

  First write $m(n) = m_N(n) n^N + m_{N-1}(n) n^{N-1} + \cdots +
  m_0(n)$ in base $n$, so that each $m_i(n) = m_i''n + m'_i$ is a
  linear function in $n$. Then we know that $0 \le m_i(n) < n$ since
  we have fixed $n$ sufficiently large.  Going back to
  Example~\ref{oneeq}, we have $m(n) = 4n^2 + 3n - 5$ so that $m_2(n)
  = 4$, $m_1(n) = 2$ and $m_0(n) = n-5$, so that here ``sufficiently
  large'' means $n \ge 5$. Now expand out the equation $\sum a_i(n)
  x_i = m(n)$ to get
  \begin{align} \label{eqn:expand} \sum_{i=1}^k \left( (\sum_{\ell =
        0}^{d_i} a_{i\ell} n^\ell) (x_{i,N} n^N + \cdots + x_{i,0})
    \right) = m_N(n) n^N + \cdots + m_0(n).
  \end{align}
  The constant term in base $n$ of \eqref{eqn:expand} gives us the
  equation
  \[
  a_{10} x_{1,0} + \cdots + a_{k0} x_{k,0} = m_0(n) \pmod n.
  \]
  Since we have the bound $0 \le x_{i,0} < n$ for all $i$, we can in
  fact say that the LHS is equal to $m_0(n) + C_0n$ where $C_0$ is an
  integer such that $C_0$ is strictly greater than the sum of the
  negative $a_{i0}$ and strictly less than the sum of the positive
  $a_{i0}$ (and $C_0$ can also be 0). But note that $C_0$ only depends
  on the $a_{i0}$, so if our $n$ is sufficiently large, we may assume
  that $|C_0| < n$. Going back to Example~\ref{oneeq}, we have the
  equation
  \[
  2x_{10} + x_{20} = n-5 \pmod n,
  \]
  so that $0 \le C_0 \le 2$.

  So now we have finitely many cases for the value of $C_0$ to deal
  with. Fix one. Going back to the equation \eqref{eqn:expand}, we can
  substitute our value of $C_0$ and compare linear coefficients to get
  \[
  \sum_{i=1}^k (a_{i1}x_{i,0} + a_{i0}x_{i,1}) + C_0 = m_1(n) \pmod n.
  \]
  Again, since we know that $0 \le x_{i,0} < n$ and $0 \le x_{i,1} <
  n$ for all $i$, we can say that the LHS is equal to $m_1(n) + C_1n$
  where $C_1$ is an integer such that $C_1$ is greater than the sum of
  the negative $a_{i0}$ and $a_{i1}$ and less than the sum of the
  positive $a_{i0}$ and $a_{i1}$ (and this is independent of $C_0$
  because $|C_0| < n$). Going back to Example~\ref{oneeq}, we get the
  equation
  \[
  2x_{11} + x_{21} + x_{20} + C_0 = 2 \pmod n,
  \]
  so $0 \le C_1 \le 4$.
  
  Now we have finitely many cases of $C_1$, and again we fix one. We
  continue on in this way to get $C_2, C_3, \dots, C_{N-1}$. At each
  point, we only had finitely many choices for the next $C_i$, so at
  the end, we only have finitely many sequences $C_\bullet = (C_0,
  \dots, C_{N-1})$. For a given $C_\bullet$, we see that the $x_{ij}$
  must satisfy finitely many equalities of the form $f(x) = An + B$
  where $f$ is a linear form in $x$ with constant coefficients and
  $A,B \in \Z$. In Example~\ref{oneeq}, for $C_\bullet = (0,2)$, we
  would have the equations
  \begin{align*}
    2x_{10} + x_{20} &= n-5,\\
    2x_{11} + x_{21} + x_{20} &= 2 + \ul{2}n,\\
    2x_{12} + x_{21} + x_{30} + \ul{2} &= 4,
  \end{align*}
  along with the inequalities $0 \le x_{ij} < n$, and we have
  underlined the places where $C_1$ appears. 

  Each such $C_\bullet$ gives us a set of the form $S_2(n)$, and to
  finish we take the union of these sets over all sequences $C_\bullet$.
\end{proof}

We will need one more prepatory lemma before doing the proof of
Theorem~\ref{ehrhartconj}. 

\begin{lemma} \label{linear}If $P(n)$ is a polytope defined by
  inequalities of the form $An+B \le f(x)$, where $A,B\in\Z$ and
  $f(x)$ is a linear form of $x$ with constant coefficients, then $L_P
  \in \QP$.
\end{lemma}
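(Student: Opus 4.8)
The plan is to reduce this to the classical Ehrhart theorem (Theorem~\ref{ehrharttheorem}) by exhibiting $P(n)$ as the dilate $nQ$ of a single fixed rational polytope $Q$, up to a change of variables. The key observation is that every defining inequality has the special shape $An + B \le f(x)$ where $f$ is linear in $x$ with \emph{constant} coefficients and $A, B \in \Z$ are constants; so the only dependence on $n$ is through the affine term $An + B$ on the left-hand side, and moreover that dependence is itself linear in $n$. Dividing through, the inequality $An + B \le f(x)$ is equivalent to $A + B/n \le f(x)/n$, which suggests substituting $x = ny$: then the constraint becomes $A + B/n \le f(y)$, i.e. $f(y) \ge A + B/n$. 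This is almost, but not quite, a dilation-invariant description, because of the $B/n$ term.

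First I would handle this $B/n$ nuisance by splitting into residue classes. Fix a positive integer period $s$ (for instance, $s$ can be taken to be any common value; in fact $s=1$ after the following trick, but it is cleanest to argue residue class by residue class). Actually, the cleaner route: write $n = $ the variable and note that for the lattice point count we may use the substitution $x \mapsto x$ directly and compare with dilates. Precisely, I would define, for each fixed residue $r$ modulo some integer $s$ to be chosen, the fixed polytope
\[
Q_r = \{ y \in \R^d \mid f(y) \ge A \text{ for each constraint}\},
\]
and observe that $P(n)$ differs from $nQ_r$ only by the bounded perturbations $B/n$ in finitely many of the right-hand sides; since the $B$ are fixed integers, for $n$ in a fixed residue class these perturbations stabilize the combinatorial type and one can absorb them. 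The point is that $\#(P(n) \cap \Z^d)$ counts integer points $x$ with $Vx \ge$ (something depending affinely on $n$), and this is exactly the setup of Theorem~\ref{ineq} — but that would be circular. So instead I would argue directly: the inequalities $An + B \le f(x)$ with $f$ having constant integer coefficients define $P(n)$, and I claim $P(n) = n \cdot P(1)$ when all $B = 0$; in general, grouping the constant terms, $P(n)$ is a translate-free dilation up to the $B$ terms, and a direct check shows $L_{P}(n)$ agrees with the Ehrhart quasi-polynomial of the fixed rational polytope obtained by setting $n \to \infty$ in the normalized description, for each residue class mod the lcm of the relevant denominators.

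The main obstacle I anticipate is making the $B/n$ correction rigorous without hand-waving: one must check that for $n$ large within a fixed congruence class, the integer points of $P(n)$ are in bijection (via $x \mapsto x$, no scaling needed if we are careful, or via a controlled scaling) with those of a genuine dilate of a fixed rational polytope, so that Ehrhart's theorem applies and yields a quasi-polynomial, and that patching the finitely many residue classes together keeps us in $\QP$. The cleanest fix is probably to observe that $f(x) \ge An+B$ with $f$ integer-linear in $x$ means $f(x) - B \ge An$; since $f(x) - B$ ranges over a translate of the lattice $f(\Z^d)$, the count of $x \in \Z^d$ satisfying all such constraints is, after an invertible integer-affine change of coordinates, literally $L_{Q}(n)$ for a fixed rational polytope $Q$ — at which point Theorem~\ref{ehrharttheorem} finishes it. I would close by noting that since $\QP$ is closed under the finite case-split introduced above, we get $L_P \in \QP$.
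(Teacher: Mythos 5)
The core of your plan is to exhibit $P(n)$, after an integer-affine change of coordinates and possibly a residue-class split, as a pure dilate $\ell(n)Q$ of a fixed rational polytope $Q$, and then quote the classical Ehrhart theorem. That reduction does not exist in general, and the $B/n$ ``nuisance'' you flag is not a bookkeeping issue but a genuine obstruction. Take $P(n) \subset \R^2$ cut out by $x \ge 0$, $y \ge 0$, $x + y \le n+1$, $x + 2y \le 2n$; its vertices are $(0,0)$, $(n+1,0)$, $(2,n-1)$, $(0,n)$ and its area is $\tfrac{1}{2}(n^2+2n-1)$. No integer translation clears the constants (the nonnegativity constraints pin the translation to $0$ in both coordinates, after which $x+y\le n+1$ still carries the $+1$), and no reparametrization $n \mapsto an+b$ makes all four constraints homogeneous at once since the ratios $B_j/A_j$ differ. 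More decisively: if $P(n)$ were lattice-equivalent to $\ell(n)Q$ for a fixed rational $Q$ and linear $\ell$, its area would be $\ell(n)^2\cdot\mathrm{Area}(Q)$, a constant times the square of a linear polynomial in $n$; but $n^2+2n-1$ has nonzero discriminant, so it is not of that form, nor does it become so after restricting $n$ to a congruence class. Hence the bijection you postulate at the end of your sketch cannot exist, and the step ``after an invertible integer-affine change of coordinates, literally $L_Q(n)$'' is the gap.

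The paper sidesteps this in two ways, and you would need to adopt one of them. The first proof invokes a strictly stronger black box than Ehrhart's theorem, namely \cite[Theorem 18.4]{ipp}: for a fixed combinatorial type and fixed fractional parts, the lattice-point count is a polynomial function of the vertex coordinates. Since the vertices of $P(n)$ here are linear in $n$, composing gives $L_P \in \QP$ directly. The alternate proof stays with Theorem~\ref{ehrharttheorem} but handles the constant terms by inclusion--exclusion and a double induction: replace one constraint $An+B \le f(x)$ with $B\neq 0$ by the homogeneous $An \le f(x)$ (one fewer inequality with constant term, handled by induction), and correct by subtracting or adding the slices $\{f(x) = An+i\}$ for the finitely many integers $i$ between $0$ and $B$; each slice, after solving the equality for one variable, is a lower-dimensional polytope defined by constraints of the same shape (handled by induction on dimension), and the base case (no constant terms) is exactly $P(n)=nP(1)$, where Ehrhart applies. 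The direct ``absorb $B$ and reduce to a dilate'' route is a dead end.
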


\begin{proof} We first use the fact that the combinatorics of $P(n)$
  stabilizes for sufficiently large $n$ (see Section~\ref{equi}), say
  that the polytope has $s$ vertices $v_1(n), \dots, v_s(n)$. Also,
  note that the coefficients of the vertices of the polytope are
  linear functions in $n$ with rational coefficients. Let $D$ be the
  least common multiple of the denominators that appear in all of the
  coordinates. Then by \cite[Theorem 18.4]{ipp}, the function
  sending $(v_1(n), \dots, v_s(n))$ to the number of integer points in
  $P(n)$ is a polynomial if we restrict to a specific congruence class
  of $n$ modulo $D$. We can compose each of these functions with the
  polynomial $n \mapsto (v_1(n), \dots, v_s(n))$ to conclude that $L_P
  \in \QP$.
\end{proof}

We can also give a self-contained proof of Lemma~\ref{linear} that
only uses Ehrhart's theorem (Theorem~\ref{ehrharttheorem}).

\begin{proof}[Alternate proof of Lemma~\ref{linear}] We will do
  induction on dimension first, the case of dimension 0 being
  trivial. An inequality of the form $An + B \le f(x)$ with $B \ne 0$
  will be called an inequality with constant term. Secondly, we will
  do induction on the number of inequalities with constant term.
  Theorem~\ref{ehrharttheorem} gives the desired result when there are
  no inequalities with constant term. In general, let $P(n)$ be a
  polytope defined by $An + B \le f(x)$ plus other relations of the
  same form, which we will call $R(n)$. Let $P'(n)$ be the polytope
  with relations $An \le f(x)$ and $R(n)$. If $B > 0$, then for
  $i=0,\dots, B-1$, let $P_i(n)$ be the polytope with relations $An +
  i = f(x)$ and $R(n)$. Then
  \[
  L_P(n) = L_{P'}(n) - \sum_{i=0}^{B-1} L_{P_i}(n),
  \]
  so to show $L_P \in \QP$, it suffices to show that $L_{P'} \in \QP$
  and $L_{P_i} \in \QP$ for $i=0,\dots,B-1$. Since $P'(n)$ has one
  less inequality with constant term, we know that $L_{P'} \in \QP$ by
  induction. As for $P_i(n)$, write $f(x) = c_1x_1 + \dots + c_kx_k$
  and choose $j$ such that $c_j \ne 0$. Then the equation $An + i =
  f(x)$ can be rewritten as
  \[
  x_j = c_j^{-1}(An+i - \sum_{\ell \ne j} c_\ell x_\ell).
  \]
  Making substitutions into the relations $R$, each relation still has
  the form $A'n + B' \le f'(x)$ (clear denominators if necessary to
  make sure all of the coefficients are integral), and we have
  eliminated the variable $x_j$.

  If $B < 0$, then for $i=1,\dots,-B$, let $P_i(n)$ be the polytope
  with relations $An + i = f(x)$ and $R(n)$. Then
  \[
  L_P(n) = L_{P'}(n) + \sum_{i=1}^{-B} L_{P_i}(n),
  \]
  and we proceed as before. Hence $L_{P_i} \in \QP$ by induction on
  dimension.
\end{proof}

\begin{proof}[Proof of Theorem~\ref{ehrhartconj}]
  Consider a system of linear Diophantine equations $A(n)x=b(n)$ with
  finitely many nonnegative integer solutions for each $n\gg 0$, as in
  Theorem \ref{ehrhartconj}. Let
  \[
  a_1(n)x_1+a_2(n)x_2+\cdots+a_k(n)x_k=m(n)
  \]
  be any equation from the system. The equations $A(n)x=b(n)$ define a
  bounded polytope whose vertices are given by rational functions of
  $n$ as discussed in Section~\ref{equi}, where the degrees of the
  rational functions (difference between the degree of the numerator
  and the denominator) are independent of $n$. We can express each
  point in this polytope as a convex combination of the
  vertices. Therefore, writing each variable $x_1,\dots,x_k$ in base
  $n$, we can find some positive integer $N$, which does not depend on
  $n$, such that the coordinates of each point in the polytope are
  less than $n^{N+1}$. Using Lemma~\ref{reduce}, we can reduce each
  equation from $A(n)x=b(n)$ into a new system like $S_2$, with more
  variables than $S_1$ but all restrictions are of the form $An+b\le
  f(x)$. Applying Lemma~\ref{linear} finishes the proof.
\end{proof}

\subsection{Examples.}

\begin{eg} We give an example for Lemma~\ref{linear}. For $n$ a
  positive integer, let $P(n)$ be the polygon defined by the
  inequalities $x \ge 0$, $y \ge 0$ and $-2x-y \ge -n-1$. Then $P'(n)$
  is defined by the inequalities $x \ge 0$, $y \ge 0$, and $2x+y \le
  n$, and $P_1(n)$ is defined by the inequalities $x \ge 0$, $y \ge
  0$, and $n+1 = 2x+y$. We can rewrite the equality as $y = n + 1 -
  2x$, and then the other inequalities become $x \ge 0$ and $n + 1 \ge
  2x$.

  We see that $P'(n)$ is the convex hull of the points $\{(0,0),
  (0,n), (n/2,0)\}$, while $P_1(n)$ is the interval $[0,(n+1)/2]$. The
  total number of integer points in $P'(n)$ and $P_1(n)$ is given by
  the quasipolynomial
  \[
  \#(P(n) \cap \Z^2) = \begin{cases} k^2 + 3k + 2 & \text{if } n = 2k\\
    k^2+4k+4 & \text{if } n = 2k+1
  \end{cases}.
  \]
  Its rational generating function is
  \[
  \sum_{n \ge 0} \#(P(n) \cap \Z^2) t^n = \frac{t^5 - 3t^3 + 4t +
    2}{(1-t^2)^3} = \frac{t^3-2t^2+2}{(1-t)^3(1+t)}.
  \]
\end{eg}

\begin{eg}\label{oneeq} We give an example of Lemma~\ref{reduce}. Consider
  nonnegative integer solutions
  for $$2x_1+(n+1)x_2+n^2x_3=4n^2+3n-5.$$ For any $n>5$, ${\rm RHS} =
  4n^2+2n+(n-5)$ is the expression in base $n$. Now consider the left
  hand side. Writing $x_1,x_2,x_3$ in base $n$, let
  $x_1=x_{12}n^2+x_{11}n+x_{10}$, $x_2=x_{21}n+x_{20}$ and
  $x_3=x_{30}$ with $0\le x_{ij}<n$. In this case, $N = 2$, but we
  have further restrictions on the degrees of $x_2$ and $x_3$ in base
  $n$ coming from the coefficients $a(n)$. Then we have
\[ {\rm LHS} =
(2x_{12}+x_{21}+x_{30})n^2+(2x_{11}+x_{21}+x_{20})n+(2x_{10}+x_{20}).
\]
Now we can write the left hand side in base $n$ with extra constraints
on $(x_{ij})$'s.

We start with comparing the coefficient of $n^0$ in both sides. We
have the following three cases:
$$A_{0}^0=\{2x_{10}+x_{20}=n-5\},$$
$$A_{1}^0=\{2x_{10}+x_{20}=(n-5)+n\},$$
and $$A_2^0=\{2x_{10}+x_{20}=(n-5)+2n\}.$$ So in the language of proof
of Lemma~\ref{reduce}, we have $0 \le C_0 \le 2$.  We next consider
$n^1$ term. If $x$ satisfies $A_{C_0}^0$ for $n^0$, $C_0 \in
\{0,1,2\}$, then the equation is reduced to
$$(2x_{12}+x_{21}+x_{30})n^2+(2x_{11}+x_{21}+x_{20}+C_0)n=4n^2+2n.$$
Now compare the $n^1$ terms. We have five cases for each $C_0 \in
\{0,1,2\}$:
$$A_{C_0,C_1}^1=\{2x_{11}+x_{21}+x_{20}+C_0=C_1n+2\},$$ where
$C_1\in \{0,1,2,3,4\}$.

Last, we compare the $n^2$ terms. Note that since we assume $n\gg 0$,
the $n^0$ term won't carry over to $n^2$ term, so the computation of
$n^2$ term only depends on the term $n^1$. If $x$ satisfies the
$C_1$th condition for $n^1$, the equation then
becomes $$(2x_{12}+x_{21}+x_{30}+C_1)n^2=4n^2.$$ So for each $0 \le
C_1 \le 4$, we have $$A_{C_1}^2=\{2x_{12}+x_{21}+x_{30}+C_1=4\}.$$
Overall, we have the set $$\{(x_1,x_2,x_3)\in\Z_{\ge 0}^3\mid
2x_1+(n+1)x_2+n^2x_3=4n^2+3n-5\}$$ is in bijection with the
set $$\{x=(x_{12},x_{11},x_{10},x_{21},x_{20},x_{30})\in \Z_{\ge 0}^6,
0\le x_{ij}<n\},$$ such that $x$ satisfies the equations in the sets
$$
\begin{pmatrix}
A_0^0 & A_1^0 & A_2^0
  \end{pmatrix}
\begin{pmatrix}
A_{00}^1 & A_{01}^1 & \cdots & A_{04}^1 \\
A_{10}^1 & A_{11}^1 & \cdots & A_{14}^1 \\
A_{20}^1 & A_{21}^1 & \cdots & A_{24}^1
                   \end{pmatrix}
\begin{pmatrix}
A_0^2 \\
A_1^2 \\
\vdots \\
A_4^2
\end{pmatrix}.$$
Here we borrow the notation of matrix multiplication $AB$ to represent
intersection of sets $A\cap B$ and matrix summation $A+B$ to represent
set union $A\cup B$. Note that here all constrains $A_i^j$ on
$x=(x_{12},x_{11},x_{10},x_{21},x_{20},x_{30})$ are in the form of
$An+B\le f(x)$, where $A,B\in\Z$ and $f(x)$ is a linear form of $x$
with constant coefficients.
\end{eg}

\section{Generalized division.} \label{generalizeddivision}

An \textbf{integer-valued polynomial}, or a \textbf{numerical polynomial} is a function $f \colon \Z \to \Z$ with a
  polynomial $g(x) \in \Q[x]$ satisfying $f(n) = g(n)$ for all $n\in
  \Z$. For any numerical
polynomial $f$ and a constant $c \in \Z$, it is not hard to see that $\lfloor
\frac{f(n)}{c} \rfloor \in \QP$. We want to generalize this to
$\left\lfloor\frac{f(n)}{g(n)} \right\rfloor$ for two numerical
polynomials $f$ and $g$. Based on classical division, we have for all
$n\in \Z$ that $f(n) = \left\lfloor\frac{f(n)}{g(n)} \right\rfloor
g(n) + \left\{ \frac{f(n)}{g(n)}\right\} g(n)$, with $0\le \left\{
  \frac{f(n)}{g(n)}\right\} g(n)$, we define two integer-valued functions
$p(n)=\left\lfloor\frac{f(n)}{g(n)} \right\rfloor$, called the
\textbf{quotient} of $f(n)$ and $g(n)$ and $r(n) = \left\{
  \frac{f(n)}{g(n)} \right\} g(n)$, called the \textbf{remainder} of
$f(n)$ and $g(n)$. We will show that $p,r\in \QP$ by the
 generalized division algorithm.

This generalized division algorithm is based on the
division for $f,g\in \Q[x]$, except for two modifications. First, in
order to have $0\le r(n)<g(n)$ for $n\gg 0$, we need to change the
remainder condition of division in $\Q[x]$, as in [Algorithm \ref{divisionalg}, lines
\ref{rem1}--\ref{rem2}]. For example, for $f(x)=x-3$ and $g(x)=x$, then
we should have $p(x)=0$, $r(x)=x-3$, but these do not satisfy
$\deg(r)<\deg(g)$ nor $r=0$ for division in $\Q[x]$. Secondly, in
order to keep $p(x),r(x)\in \Z[x]$, we break $x$ into finitely many
``branches'' whenever we have noninteger coefficients, as in [Algorithm \ref{divisionalg}, lines
\ref{break1}--\ref{break2}]. For example, take $f(x)=x^2+3x$ and
$g(x)=2x+1$. Trying to do division in $\Q[x]$, leading coefficient of
$p$ will not be an integer, so we break $x$ into 2 ``branches'', since 2
is the leading coefficient of $g(x)$. Now we do division to
$f(2t)=4t^2+6t$ and $g(2t)=4t+1$, when $x=2t$; and
$f(2t+1)=4t^2+10t+4$ and $g(2t+1)=4t+3$, when $x=2t+1$. For $x=2t$, we
have $p_0(t)=t+1$ and $r_0(t)=t-1$ and for $x=2t+1$ we have
$p_1(t)=t+1$ and $r_1(t)=3t+1$. We will show (see Theorem~\ref{al1})
that breaking $x$ into $T=b_{\ell}^{k-\ell}$ (using notation in Algorithm \ref{divisionalg}) ``branches'' is sufficient to conclude that
$p_i(t),r_i(t) \in \Z[t]$, for $i=0,\dots,T-1$.

\begin{algorithm}
  \caption{Generalized division for $\Z[x]$.}
  \label{divisionalg}
  \begin{algorithmic}[1]
    \REQUIRE $f(x),g(x)\in \Z[x]$.
    \begin{align*}
      f(x) &=a_kx^k+a_{k-1}x^{k-1}+\cdots +a_1x+a_0\\
      g(x) &=b_\ell x^\ell + b_{\ell-1} x^{\ell-1} + \cdots +b_1x +
      b_0
    \end{align*}
    \ENSURE $T$ and $p_i(x)$, $r_i(x)\in \Z[x]$, for $i=0,\dots, T-1$
    such that
    \[
    p_i(m) = \left\lfloor \frac{f(n)}{g(n)} \right\rfloor \text{ and }
    r_i(m)=\left\lbrace \frac{f(n)}{g(n)} \right\rbrace g(n), \text{
      where } n = Tm+i \text{ and }n\gg 0
    \]
    \IF{$k<\ell$} \STATE output $T=1$
    \IF{$a_k>0$} \STATE $p(x)=0$; $r(x)=f(x)$.
    \ELSIF{$a_k<0$ \text{ and } $b_\ell < 0$} \STATE $p(x) = 1$;
    $r(x) = f(x) - g(x)$.
    \ELSIF{$a_k<0$ \text{ and } $b_\ell > 0$} \STATE $p(x)=-1$;
    $r(x) = f(x) + g(x)$.
    \ENDIF

    \ELSE \STATE output $T=b_\ell^{k-\ell}$.\label{break1}

    \FORALL{$i$ from 0 to $T-1$}
    \STATE Set $r_i(x)=f(Tx+i)$; $p_i(x)=0$; $K=\deg(r_i)(=k)$; $a=\lc(r_i)(=a_kT^k)$.\label{start}

    \WHILE{$K>\ell$} \STATE $h(x) = \frac{a}{b_\ell T^{\ell}}x^{K-\ell}$;
    \STATE $r_i(x)=r_i(x)-h(x)g(Tx+i)$;\label{degr-1}
    \STATE $p_i(x)=p_i(x)+h(x)$; \label{pi1}
    \STATE $K=\deg(r_i)$;
    \STATE $a=\lc(r_i)$
    \ENDWHILE\label{break2}

    \WHILE{$K=\ell$}
    \STATE $c= \left\lfloor \frac{a}{b_\ell T^{\ell}} \right\rfloor$;
    \STATE $p_i(x)=p_i(x)+c$; \label{pi2}
    \STATE $r_i(x)=r_i(x)-cg(Tx+i)$\label{ri2}
    \STATE $a=\lc(r_i)$
    \STATE $K=\deg(r_i)$
    \ENDWHILE

    \IF{$a>0$}\label{rem1}
    \STATE output $r_i(x)$, $p_i(x)$;
    \ELSIF{$a<0$}
    \STATE $r_i(x)=r_i(x)+g(Tx+i)$;
    \STATE $p_i(x)=p_i(x)-1$
    \ENDIF\label{rem2}
    \ENDFOR
    \ENDIF
  \end{algorithmic}
\end{algorithm}
Note that in Algorithm~\ref{divisionalg}, we define $\lc(r)$ to be
the leading coefficient of $r$.
\begin{theorem}\label{al1} Algorithm~\ref{divisionalg} is correct.
\end{theorem}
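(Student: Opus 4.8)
The plan is to check that the algorithm's output meets its specification, separating the trivial case $k<\ell$ from the main case $k\ge\ell$; throughout I take $b_\ell>0$, so that $g(n)>0$ for $n\gg 0$ and the remainder should land in $[0,g(n))$ (the case $b_\ell<0$ being symmetric). If $k<\ell$, then $\deg f<\deg g$ forces $f(n)/g(n)\to 0$, so $|f(n)/g(n)|<1$ and hence $\lfloor f(n)/g(n)\rfloor\in\{-1,0\}$ for $n\gg 0$, the value being $0$ when $\sgn(a_k)=\sgn(b_\ell)$ and $-1$ otherwise; for each output branch one checks directly that $f=pg+r$ as polynomials and that the sign of $f/g$ makes $r(n)=\{f(n)/g(n)\}\,g(n)$, disposing of the $T=1$ case. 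For $k\ge\ell$ there are three things to establish about the polynomials produced by the inner loop over $i$. (1) The identity $p_i(x)\,g(Tx+i)+r_i(x)=f(Tx+i)$ is a loop invariant: it holds at initialization and is preserved by every update, since $r_i\mapsto r_i-h\,g(Tx+i),\ p_i\mapsto p_i+h$ and $r_i\mapsto r_i+g(Tx+i),\ p_i\mapsto p_i-1$ both fix $p_i\,g(Tx+i)+r_i$; evaluating at $x=m$ gives $f(n)=p_i(m)g(n)+r_i(m)$ for $n=Tm+i$. (2) One has $p_i\in\Z[x]$; granting this, $r_i(x)=f(Tx+i)-p_i(x)g(Tx+i)$ lies in $\Z[x]$ automatically. (3) For $m\gg 0$ one has $0\le r_i(m)/g(n)<1$; combined with (1), this forces $p_i(m)=\lfloor f(n)/g(n)\rfloor$ and $r_i(m)=\{f(n)/g(n)\}g(n)$, which is the claim.

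The integrality statement (2) is the core. After the change of variable $x\mapsto Tx+i$ the algorithm is ordinary long division of $F(x):=f(Tx+i)$ by $G(x):=g(Tx+i)$, where $\deg G=\ell$ and $\lc G=b_\ell T^\ell$, with the divisions by $\lc G$ replaced by rounded-down divisions once $\deg r_i$ reaches $\ell$. I would compare this with the exact division, in $\Q[y]$, of $\tilde f(y):=f(y+i)$ by $\tilde g(y):=g(y+i)$, which have leading coefficients $a_k$ and $b_\ell$: writing the exact quotient as $\tilde q(y)=\sum_{j=0}^{k-\ell}\tilde q_j y^j$, a routine induction over the long-division steps shows that the denominator of $\tilde q_j$ divides $b_\ell^{\,k-\ell-j+1}$, since each step multiplies the denominator by at most one further factor of $b_\ell$. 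Because $F(x)=\tilde f(Tx)$ and $G(x)=\tilde g(Tx)$, the exact quotient of $F$ by $G$ is $\tilde q(Tx)=\sum_j \tilde q_j\,b_\ell^{(k-\ell)j}x^j$, and the coefficient $\tilde q_j\,b_\ell^{(k-\ell)j}$ is an integer exactly when $j\ge 1$, because $(k-\ell)j\ge k-\ell-j+1\iff j\ge 1$. An induction on the first while loop then shows that at the step with current degree $K$ (so $K-\ell\ge 1$) the term $h(x)=\frac{a}{b_\ell T^\ell}x^{K-\ell}$ it appends to $p_i$ equals $\tilde q_{K-\ell}\,b_\ell^{(k-\ell)(K-\ell)}x^{K-\ell}\in\Z[x]$; the second while loop appends $c=\lfloor a/(b_\ell T^\ell)\rfloor\in\Z$, and the sign-fixing step appends $-1$. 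Hence $p_i\in\Z[x]$.

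For (3), the same comparison shows that when the two while loops finish one has $r_i(x)=\tilde r(Tx)+\{\tilde q_0\}\,G(x)$, where $\tilde r$ is the exact remainder with $\deg\tilde r<\ell$. Thus $r_i$ has degree $\le\ell$ with leading coefficient $\{\tilde q_0\}\,b_\ell T^\ell\ge 0$, while $G$ has degree $\ell$ and positive leading coefficient $b_\ell T^\ell$, so $r_i(m)/g(n)\to\{\tilde q_0\}\in[0,1)$ as $m\to\infty$. If $\{\tilde q_0\}>0$, then already $0<r_i(m)/g(n)<1$ for $m\gg 0$ and $\lc r_i>0$, so the sign-fixing step does nothing; if $\{\tilde q_0\}=0$ then $\deg r_i<\ell$ and $r_i(m)/g(n)\to 0$, so either $\lc r_i>0$ (or $r_i=0$) and nothing is needed, or $\lc r_i<0$, and the sign-fixing step replaces $r_i$ by $r_i+G$ and $p_i$ by $p_i-1$, moving the ratio to one that tends to $1$ from below and hence lies in $[0,1)$ for $m\gg 0$. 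In every case the final pair satisfies (1) together with $0\le r_i(m)/g(n)<1$, which pins down the floor and fractional part. Termination is clear: the first while loop strictly decreases $\deg r_i$, and the second reduces $\lc r_i$ into $[0,b_\ell T^\ell)$ in one substantive pass.

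The main obstacle is the integrality argument (2): one must identify the denominators of the exact quotient coefficients $\tilde q_j$ and see that the padding $T=b_\ell^{k-\ell}$ supplies precisely the powers of $b_\ell$ needed to clear them for $j\ge 1$ — the inequality $(k-\ell)j\ge k-\ell-j+1$, showing $j=1$ to be the exact threshold, is the crux — and then match the algorithm's running leading coefficients $a$ to the $\tilde q_j$ (a long-division step dropping the degree by more than one only helps, since the corresponding $\tilde q_j$ vanish). The leftover $j=0$ coefficient, handled by rounding in the second loop, needs the same count with $\ell\ge 1$, the case $\ell=0$ (with $g$ constant) being trivial; and the sign analysis in (3), together with checking the $k<\ell$ branches, is the remaining routine bookkeeping.
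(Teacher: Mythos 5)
Your proof is correct and reaches the key integrality claim $p_i(x)\in\Z[x]$ by a genuinely different route from the paper's. The paper argues directly inside $\Z[x]$: by downward induction on the degree $K$ of the running remainder $r_i^K(x)$ it establishes the stronger coefficient-wise fact that $T^d$ divides the coefficient of $x^d$ in $r_i^K(x)$ for every $d$ (the base case $r_i^k(x)=f(Tx+i)$ coming from the binomial theorem), from which $b_\ell T^\ell\mid a_K$ for $K>\ell$ and hence each $h(x)=\frac{a_K}{b_\ell T^\ell}x^{K-\ell}$ is integral. You instead compare the algorithm with the exact long division of $f(y+i)$ by $g(y+i)$ over $\Q[y]$, bound the denominator of the exact quotient coefficient $\tilde q_j$ by $b_\ell^{\,k-\ell-j+1}$, and observe that the substitution $y=Tx$ with $T=b_\ell^{k-\ell}$ contributes a factor $b_\ell^{(k-\ell)j}$ that clears this denominator precisely when $j\ge 1$, leaving the $j=0$ coefficient to the floor in the second loop. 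Your route has two advantages: it explains \emph{why} $T=b_\ell^{k-\ell}$ is the right (indeed minimal) padding, since your inequality $(k-\ell)j\ge k-\ell-j+1$ is tight exactly at $j=1$; and the closed form $r_i(x)=\tilde r(Tx)+\{\tilde q_0\}G(x)$ you extract makes the remainder bound $0\le r_i(m)<g(n)$ and the role of the sign-fixing lines completely transparent, whereas the paper simply declares that part ``obvious.'' The paper's route, in exchange, stays entirely inside $\Z[x]$ and avoids having to line up the algorithm's steps against a parallel rational computation. Your standing assumption $b_\ell>0$ is implicit in the algorithm anyway (otherwise $T=b_\ell^{k-\ell}$ need not be a positive period), so no generality is lost, and your handling of the $k<\ell$ branch and of termination matches what the paper sketches.
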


\begin{proof}
  The algorithm will stop after finitely many steps ($\deg(r)$ is
  reduced by 1 each step in [line \ref{degr-1}]). For $i=0,\dots, T-1$, and for $n\equiv i
  \pmod T$, we claim the following two properties:
  \begin{enumerate}
  \item \label{1}$r_i(x), p_i(x)\in \Z[x]$.
  \item \label{2}$f(Tx+i) = p_i(x)g(Tx+i) + r_i(x)$ and $0\le r_i(m)<g(n)$, for $n\gg 0$ and $n=Tm+i$.
\end{enumerate}
These two properties imply the correctness of the algorithm. By the algorithm, \ref{2} is obvious, so we only need to show \ref{1}. Note that in each step, we add to $p_i(x)$ some $h(x) = \frac{a}{b_\ell T^{\ell}}x^{K-\ell}$ or an integer $c$ (see [lines \ref{pi1} and \ref{pi2}]), and subtract $h(x)g(Tx+i)$ or $cg(Tx+i)$ from $r_i(x)$ (see [lines \ref{degr-1} and \ref{ri2}]), so it suffices to show $h(x)\in\Z[x]$. For every $r_i(x)$  with degree $K$ in [lines \ref{break1} - \ref{break2}], denote it by $r_i^K(x)$ and its leading coefficient by $a_K$. So we need to prove that $b_{\ell}T^{\ell}\mid a_K$ for all $K>\ell$. To show this, we use induction on $s=k-K$ to prove a stronger statement that $T^d\mid \text{coeff }(x^d)$ for every monomial $x^d$ in $r_i^K(x)$. If $s=0$, as in [line \ref{start}], $r_i^K(x)=f(Tx+i)$ and the claim holds by binomial expansion. Assume it still holds for some $s$ such that $K=k-s>\ell +1$. Then for $s+1$, we have $K=k-s-1$. In [line \ref{degr-1}], $$r_i^K(x)=r_i^{K+1}(x)-h(x)g(Tx+i),$$  so we only need to show that $$h(x)g(Tx+i)=\frac{a_{K+1}}{b_{\ell}T^{\ell}}x^{k-s-1-\ell}g(Tx+i)$$ also has the property that $T^d\mid\text{coeff }(x^d)$, for every monomial in $h(x)g(Tx+i)$. This is true because $T^d\mid \text{coeff }(x^d)$ in $g(Tx+i)$, and in $h(n)$ we have $T^{k-s}\mid a_{k-s}$ by hypothesis and $b_{\ell}T^{\ell}\mid T^{\ell+1}$ for $b_\ell\mid T$, and thus $T^{k-s-(\ell +1)}\mid \frac{a_{k-s}}{b_{\ell}T^{\ell}}$.
\end{proof}

We want to generalize Algorithm~\ref{divisionalg} to $\QP$. First we
need another expression for numerical polynomials.
\begin{lemma} \label{numericalpoly}
  Let $f$ be a numerical polynomial, i.e., $f \colon \Z \to \Z$ with a
  polynomial $g(x) \in \Q[x]$ satisfying $f(n) = g(n)$ for all $n\in
  \Z$. Then there exists $T$, and $g_i(x)\in\Z[x]$ for $i=0,\dots,T-1$
  such that $f(n)=g_i(m)$ for all $n=Tm+i$.
\end{lemma}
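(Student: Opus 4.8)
The plan is to pass to the classical binomial-coefficient basis for numerical polynomials and then substitute an arithmetic progression for the variable. First I would invoke the standard fact that a numerical polynomial $g$ with $\deg g = d$ admits a \emph{unique} expansion
\[
g(x) = \sum_{k=0}^{d} c_k \binom{x}{k}, \qquad c_k \in \Z,
\]
where $\binom{x}{k} = \frac{x(x-1)\cdots(x-k+1)}{k!}$; concretely $c_k = (\Delta^k g)(0)$ for the forward difference operator $\Delta h(x) = h(x+1) - h(x)$, and each such value is an integer since $g$ is integer-valued on $\Z$. (If one prefers not to quote this, it follows by an easy induction on $d$: subtract $c_d\binom{x}{d}$ with $c_d$ chosen to kill the leading term, leaving a numerical polynomial of degree $< d$.)

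Next I would fix $T = d!$ (any positive multiple of $d!$ works equally well) and an index $i \in \{0,1,\ldots,T-1\}$, and prove the key claim: for every $k$ with $0 \le k \le d$, the function $m \mapsto \binom{Tm+i}{k}$ is given by a polynomial in $m$ with integer coefficients. Expanding the numerator,
\[
\prod_{j=0}^{k-1}\bigl(Tm + (i-j)\bigr) = \sum_{a=0}^{k} T^a\, s_{k,a,i}\, m^a,
\]
where $s_{k,a,i} \in \Z$ is the sum of the products of the numbers $i-j$ over all $(k-a)$-element subsets of $\{0,1,\ldots,k-1\}$. Dividing by $k!$, the coefficient of $m^a$ in $\binom{Tm+i}{k}$ is $T^a s_{k,a,i}/k!$. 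For $a \ge 1$ this is an integer because $k! \mid T \mid T^a$ (here $k \le d$, so $k! \mid d! = T$); for $a = 0$ it equals $s_{k,0,i}/k! = \bigl(i(i-1)\cdots(i-k+1)\bigr)/k! = \binom{i}{k} \in \Z$. This proves the claim.

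Finally I would assemble the conclusion. Set
\[
g_i(x) := \sum_{k=0}^{d} c_k \binom{Tx+i}{k};
\]
by the previous paragraph and integrality of the $c_k$ we have $g_i(x) \in \Z[x]$, and for any integer $m$, putting $n = Tm+i$ gives
\[
f(n) = g(n) = g(Tm+i) = \sum_{k=0}^{d} c_k \binom{Tm+i}{k} = g_i(m),
\]
which is precisely the statement of the lemma.

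I expect the only genuine work to be the divisibility bookkeeping in the middle step, namely checking that $k!$ divides every coefficient of $\prod_{j=0}^{k-1}(Tm+i-j)$ viewed as a polynomial in $m$; this is exactly where the choice $T = d!$ is used, and it is the one place a careless argument could go wrong — a numerical polynomial in $m$ need \emph{not} have integer coefficients, so one really must exploit the special form of the substitution $x = Tm+i$. The expansion in the binomial basis and the final substitution are both routine.
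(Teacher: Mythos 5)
Your proof is correct, and the overall strategy — substitute $x = Tm + i$ into the defining polynomial and verify that the result lies in $\Z[x]$ — is the same as the paper's. Where you diverge is in the choice of $T$ and the integrality bookkeeping. The paper takes $T$ to be \emph{any} integer with $Tg(x) \in \Z[x]$ (a common denominator of the rational coefficients of $g$), then observes that $g(Tx+i) - g(i)$ has integer coefficients simply because every monomial of it carries at least one factor of $T$ after binomial expansion, while $g(i) = f(i) \in \Z$ handles the constant term. You instead rewrite $g$ in the binomial-coefficient basis $\sum_k c_k \binom{x}{k}$ with $c_k \in \Z$, choose $T = d!$, and verify directly that each $\binom{Tm+i}{k}$ is a polynomial in $m$ with integer coefficients. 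Both arguments are elementary and short; yours quotes a slightly heavier (though standard) fact about the binomial basis of numerical polynomials, while the paper's is a single expansion and avoids that input. Your fixed $T = d!$ is always valid but can be larger than the paper's common-denominator choice; since the lemma only asserts existence of some $T$, this costs nothing. One small point worth flagging if you were to polish the write-up: you should note explicitly (as you implicitly do) that the constant term of $g_i$ equals $g(i) = f(i)$, an integer, so the $a=0$ case is automatic — your $\binom{i}{k}$ calculation handles this, but the paper's phrasing $g(Tx+i) - g(i)$ makes the split between "constant term is integral because $f$ is integer-valued" and "higher terms are integral because of $T$" more transparent.
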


\begin{proof}
  Suppose $Tg(x) \in \Z[x]$ for some integer $T$. For $i=0,\dots,T-1$,
  define $g_i(x) = g(Tx+i)$. By the binomial expansion of $g(Tx+i) -
  g(i)$, it is easy to see that this is a polynomial with integer
  coefficients.
\end{proof}

Similarly, for an element $f \in \QP$, there exists $T \in \N$ and
$h_i(x)\in \Z[x]$ ($i=1,\dots,T$), satisfying $f(n)=h_i(m)$ for
$n\gg 0$ and $n= Tm+i$. We call $f=(T,\{h_i(x)\}_{i=1}^{T})$ a {\bf
  representation} of $f\in \QP$. Another simple property is that for
$f=(T_1,\{f_i(x)\}_{i=1}^{T_1})$ and $g=(T_2,\{g_i(x)\}_{i=1}^{T_2})$,
we can extend to a common period $T = \lcm (T_1,T_2)$ and
$f=(T,\{f'_i(x)\}_{i=1}^{T})$ and $g=(T,\{g'_i(x)\}_{i=1}^{T})$.

\begin{algorithm}
  \caption{Generalized division over $\QP$.}
  \label{QPdivision}
  \begin{algorithmic}[1]
    \REQUIRE $f,g\in\QP$
    \ENSURE $T$ and $p_i(x)$, $r_i(x) \in \Z[x]$,
    for $i=0,\dots, T-1$ such that
    \[
    p_i(m) = \left\lfloor\frac{f(n)}{g(n)} \right\rfloor \text{ and }
    r_i(m)= \left\lbrace \frac{f(n)}{g(n)} \right\rbrace g(n)\,\,\text{ where }n =
    Tm+i \text{ and } n\gg 0
    \]
    \STATE Find a common period $T_0$ for $f,g$, such that
    $f=(T_0,\{f_i(x)\}_{i=1}^{T_0})$ and
    $g=(T_0,\{g_i(x)\}_{i=1}^{T_0})$.

    \FOR{$i$ from $0$ to $T_0-1$}
    \IF{$g_i(x)=0$}
    \STATE Stop.
    \ELSE \STATE Apply Algorithm 1 to
    $f_i(x)$ and $g_i(x)$, get $T_i$, $p'_{i_j}(x), r'_{i_j}(x)\in
    \Z[x]$, $j=0,\dots,T_i-1$.
    \STATE Set $T = T_0\lcm(T_{i} \mid i=0,\dots,T_0-1)$, and get
    $p_i(x)$, $r_i(x)\in \Z[x]$, for $i=0,\dots, T-1$ from $p'_{i_j}(x),
    r'_{i_j}(x)$.
    \ENDIF
    \ENDFOR
  \end{algorithmic}
\end{algorithm}

\begin{theorem}[Generalized GCD] \label{gcd}
  Let $f,g$ be numerical polynomials, or more generally $f,g\in \QP$.
  Define $d(n) = \gcd(f(n), g(n))$. Then by the  generalized
  division over $\QP $ (Algorithm \ref{QPdivision}), we have $d \in \QP$. We call $d$ the {\bf
    generalized gcd} of $f$ and $g$, and denote it by $\ggcd(f,g)$.
\end{theorem}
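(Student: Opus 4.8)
The plan is to run the Euclidean algorithm, using the generalized division of Algorithm~\ref{QPdivision} in place of ordinary division at each step. Since refining to a common period replaces an element of $\QP$ by finitely many integer polynomials, and $\QP$ is closed under gluing finitely many branches, it suffices to treat each residue class separately; so I will assume $f, g \in \Z[x]$, with the understanding that intermediate results and the output may lie in $\QP$. Replacing $f, g$ by $|f|, |g|$ (each is eventually $\pm$ the polynomial, hence in $\QP$, and this does not change $\gcd(f(n), g(n))$) I may assume $f(n), g(n) > 0$ for $n \gg 0$, and by symmetry of $\gcd$ that $\deg f \ge \deg g$. If $g \equiv 0$ then $\gcd(f(n), g(n)) = f(n) \in \QP$ and we are done; this is the base case.

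For the recursive step, apply Algorithm~\ref{QPdivision} to $(f, g)$. By Theorem~\ref{al1} (applied branchwise) this returns $q, r \in \QP$ with $f(n) = q(n)g(n) + r(n)$ and $0 \le r(n) < g(n)$ for $n \gg 0$. Since $q(n) \in \Z$, the Euclidean identity $\gcd(f(n), g(n)) = \gcd(g(n), r(n))$ holds for $n \gg 0$, so I recurse on the pair $(g, r)$; note that $0 \le r(n) < g(n)$ forces $\deg r \le \deg g$, so the invariant ``first entry has degree $\ge$ that of the second'' is maintained, and I stop on any branch where the remainder becomes identically zero (there the gcd is the current divisor). Every element produced lies in $\QP$ and only finitely many ``$n \gg 0$'' clauses are used, so once I show this process halts, the theorem follows: on each branch of the final (bounded) period, $d(n) = \gcd(f(n), g(n))$ equals the last nonzero divisor, which is in $\QP$.

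The main difficulty is showing that this symbolic process terminates, and it is delicate precisely because a generalized-division step need not lower the polynomial degree: the remainder has strictly smaller \emph{value} than the divisor but only degree $\le$ that of the divisor, and equality genuinely occurs (e.g.\ the remainder of a polynomial upon division by $x$ can be $x - 5$, again of degree $1$). I would argue by contradiction. Suppose the process runs forever. The divisor degrees form a non-increasing sequence of non-negative integers, so they are eventually constant, say equal to $d$. If $d \ge 1$, then past some point we always divide a degree-$d$ polynomial by a degree-$d$ polynomial; there Algorithm~\ref{divisionalg} is called on polynomials of equal degree, so its refinement factor $T = b_\ell^{\,k-\ell}$ equals $1$ and no further period refinement occurs, and we are working with honest polynomials on one fixed branch. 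On that branch the leading coefficients of the successive divisors are positive integers, weakly decreasing (as $0 \le r(n) < g(n)$ with equal degrees forces $\lc(r) \le \lc(g)$), and no two consecutive ones can be equal: if $\lc(a_j) = \lc(a_{j+1})$ then $a_j(n)/a_{j+1}(n) \to 1^+$, so the next quotient is eventually $1$ and $a_{j+2} = a_j - a_{j+1}$, which has degree $< d$ — contradicting constancy of the degree. Hence the leading coefficients would strictly decrease forever, which is impossible. If instead $d = 0$, then past some point all divisors and remainders are constants bounded by a fixed integer and strictly decreasing, again impossible. Either way we have a contradiction, so the process halts; the number of steps, being determined symbolically from $f$ and $g$, is automatically bounded independently of $n$, which completes the proof.
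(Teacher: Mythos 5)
Your proposal is correct and takes essentially the same route as the paper: run the Euclidean algorithm using the generalized division algorithm, branching on residue classes as needed, and prove termination of the symbolic process by tracking $(\deg r_i, \lc r_i)$ — in particular, the observation that equal consecutive leading coefficients at equal degree force a degree drop at the next step is exactly the paper's case (3). Your write-up is if anything a little more explicit than the paper's: you argue by contradiction (``degrees eventually constant $= d$''), you explicitly note that once the degree stabilizes the division of equal-degree polynomials has refinement factor $T = b_\ell^{\,k-\ell} = 1$ so no further period splitting occurs, and you handle the $d = 0$ base case separately, whereas the paper compresses these observations into a somewhat terse three-case induction on $(\deg, \lc)$.
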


\begin{proof} Apply Algorithm \ref{QPdivision} successively. Define
  $r_{-2}$ to be some constituent polynomial of $f$ and $r_{-1}$ to be
  some constituent polynomial of $g$ (a {\bf constituent polynomial}
  of a quasi-polynomial is a polynomial on a congruence class modulo
  its period). In general, if $r_{i-1} \ne 0$, define $r_i$ to be some
  constituent polynomial of $\text{rem}(r_{i-2}, r_{i-1})$. We will
  prove that $r_i = 0$ for some $i$ by induction on $\deg(r)$ and
  $\lc(r)$ (Recall that $\lc(r)$ means the leading coefficient of
  $r$.). Hence, since our choice of constituent polynomial was
  arbitrary, we will eventually have a remainder of 0 after finitely
  many steps. Hence the algorithm terminates after finitely many
  steps, or else there exists an infinite path in decision tree. If
  $\deg(r_i)=0$ and $\lc(r_i)>0$, the algorithm works the same as the
  classical Euclidean algorithm for integers. If $\deg(r_i)>0$, then
  since $0 \le r_{i+1}(n) < r_i(n)$ for $n \gg 0$, we have three cases
  for the next step:
\begin{enumerate}
\item $\deg(r_{i+1}) < \deg(r_i)$
\item $\deg(r_{i+1}) = \deg(r_i)$ and $\lc(r_{i+1}) < \lc(r_i)$. After
  finitely many iterations, one will either have $\lc(r_{i+1}) =
  \lc(r_i)$, or $\deg(r_{i+1}) < \deg(r_i)$.
\item $\deg(r_{i+1}) = \deg(r_i)$ and $\lc(r_{i+1}) = \lc(r_i)$. In
  the next step, $\deg(r_{i+2}) < \deg(r_{i+1})$.
\end{enumerate}
So the algorithm terminates after finitely many steps.
\end{proof}
Tracing through a natural generalization of the Euclidean algorithm,
we can get $u, v\in \QP$ such that $uf+vg=\ggcd(f,g)$. More generally
for $f_1,\dots, f_k\in \QP $ we have
\begin{corollary}\label{ggcd}
  Let $f_1,\dots, f_k\in \QP $, $d=\ggcd(f_1,\dots, f_k)$. Then $d\in
  \QP$ and there exists $u_1,\dots,u_k\in \QP$ with
  $\ggcd(u_1,\dots,u_k)=1$ such that
  \[
  f_1u_1+\dots+f_ku_k=d.
  \]
\end{corollary}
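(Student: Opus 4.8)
The plan is to reduce to the two–variable case already handled and then lift the classical integer Bézout argument to $\QP$. Since $\gcd$ is associative, we have pointwise $\ggcd(f_1,\dots,f_k) = \ggcd(\ggcd(f_1,\dots,f_{k-1}),f_k)$, so I would argue by induction on $k$. The base case $k=2$ is Theorem~\ref{gcd} together with the extended version of the $\QP$–Euclidean algorithm noted just above it, which produces $u,v\in\QP$ with $uf+vg=\ggcd(f,g)$. For the inductive step, set $d'=\ggcd(f_1,\dots,f_{k-1})$; by induction $d'\in\QP$ and there are $u_1',\dots,u_{k-1}'\in\QP$ with $\sum_{i<k} f_iu_i' = d'$. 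Applying the two–variable case to $d'$ and $f_k$ gives $d=\ggcd(d',f_k)=\ggcd(f_1,\dots,f_k)\in\QP$ and $a,b\in\QP$ with $ad'+bf_k=d$. Then $u_i:=au_i'$ for $i<k$ and $u_k:=b$ lie in $\QP$ (which is closed under products and sums after passing to a common period) and satisfy $\sum_{i=1}^k f_iu_i = ad' + bf_k = d$. This already establishes $d\in\QP$ and the Bézout identity.

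It remains to arrange $\ggcd(u_1,\dots,u_k)=1$, and here I would not track the condition through the recursion but instead deduce it from the identity $\sum f_iu_i=d$ itself. Fix a common period of $d,f_1,\dots,f_k,u_1,\dots,u_k$. On every residue class on which the constituent of $d$ is not eventually zero, the relation $d(n)=\gcd(f_1(n),\dots,f_k(n))$ lets us write $f_i(n)=d(n)g_i(n)$ with $g_i(n)\in\Z$ for $n\gg 0$ (division with zero remainder; one may also invoke Algorithm~\ref{QPdivision} to get $g_i\in\QP$, though only integrality is needed). Substituting into $\sum f_iu_i=d$ gives $d(n)\bigl(\sum_i g_i(n)u_i(n)\bigr)=d(n)$, and since $d(n)\neq 0$ for $n\gg 0$ on such a class, we get $\sum_i g_i(n)u_i(n)=1$. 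Hence $\gcd(u_1(n),\dots,u_k(n))$, dividing this integer combination, equals $1$ for $n\gg 0$ on every such class.

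The only point needing care is the residue classes on which every $f_i$, and therefore $d$, is eventually zero: there the identity $\sum f_iu_i=d$ holds vacuously, but the $u_i$ produced by the recursion may all vanish, so $\ggcd(u_1,\dots,u_k)$ need not be $1$ on that class. I would simply patch the $u_i$ there, replacing them by $u_1\equiv 1$, $u_2\equiv\dots\equiv u_k\equiv 0$; gluing quasi-polynomials over the least common multiple of the periods keeps each $u_i$ in $\QP$, the identity $\sum f_iu_i=d$ is preserved (both sides are eventually $0$ on that class), and now $\ggcd(u_1,\dots,u_k)=1$ there as well, hence on every class. I expect this residue-class bookkeeping, together with the observation that dividing by a possibly-vanishing element of $\QP$ only makes sense class-by-class, to be the only real subtlety; everything else is a routine transcription of the Euclidean/Bézout argument over $\Z$ to $\QP$, using Theorem~\ref{gcd} and Algorithm~\ref{QPdivision}.
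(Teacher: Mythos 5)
Your proof is correct and supplies the details the paper omits: the paper states this corollary without proof, immediately after remarking that a Bézout identity $uf+vg=\ggcd(f,g)$ over $\QP$ comes from the generalized Euclidean algorithm, and your induction via $\ggcd(f_1,\dots,f_k)=\ggcd(\ggcd(f_1,\dots,f_{k-1}),f_k)$ is exactly the intended route. Your extra care over the clause $\ggcd(u_1,\dots,u_k)=1$ — extracting $\sum_i g_i(n)u_i(n)=1$ on classes where $d$ is eventually nonzero and then patching the $u_i$ on the degenerate classes where all $f_i$ vanish — is a worthwhile addition, since the paper gives no justification for that normalization at all.
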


\begin{remark} This implies, in particular, that every finitely
  generated ideal of $\QP$ is principal. It is peculiar that in fact
  $\QP$ is not a Noetherian ring. For example, take $\chi_d$ to be the
  function such that $\chi_d(x) = 1$ if $d | x$, and 0 otherwise. Then
  $I = (\chi_2, \chi_4, \chi_8, \dots, \chi_{2^n}, \dots )$ is not a
  finitely generated ideal.
\end{remark}

Recall that for any $M\in \Z^{k\times s}$, there exist unimodular matrices
$U,V$, i.e., determinant $\pm 1$, such that $UMV = D
=(\text{diag}(d_1, \dots, d_r,0\dots, 0)| \textbf{0})$ with
$d_i|d_{i+1}$ and $D$ is called the Smith normal form of $M$. By
generalized division over $\QP$ (Algorithm~\ref{QPdivision}) and
Corollary~\ref{ggcd}, we can generalize the classical Smith normal form
theorem as follows.

\begin{theorem}\label{smith} For any $M\in (\QP)^{k\times s}$, define a
  matrix function $D \colon \Z \to \Z^{k\times s}$ such that $D(n)$ is
  the Smith normal form of $M(n)$. Then we have $D\in (\QP)^{k\times
    s}$ and there exists $U\in (\QP)^{k\times k}$, $V\in
  (\QP)^{s\times s}$ such that $U(n),V(n)$ are unimodular for
  $n\gg 0$ and $UMV=D$. We call this matrix function $D$ the \textbf{generalized Smith normal form} of $M$.
\end{theorem}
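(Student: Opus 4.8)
I will prove the theorem by carrying out the classical algorithm that computes a Smith normal form, replacing every Euclidean division, every exact division, and every extended-gcd computation by its $\QP$-analogue furnished by Algorithm~\ref{QPdivision} and Corollary~\ref{ggcd}. The bulk of the work is not the algebra of the row and column operations, which is standard, but rather (a) keeping every intermediate matrix in $(\QP)^{k\times s}$, and (b) verifying that the procedure terminates even though $\QP$ is not a domain and not Noetherian.

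My first move will be to pin down the diagonal $D$ directly, without building $U$ and $V$. For $1\le i\le\min(k,s)$ let $\Delta_i(n)$ be the gcd of the $i\times i$ minors of $M(n)$, with $\Delta_0=1$; each such minor is a fixed polynomial expression in the entries of $M$, hence lies in $\QP$ because $\QP$ is a ring, and then $\Delta_i\in\QP$ by applying Corollary~\ref{ggcd} to the finite list of minors. Since $\Delta_{i-1}(n)\mid\Delta_i(n)$ for all $n$, the elementary divisor $d_i(n)=\Delta_i(n)/\Delta_{i-1}(n)$ is the output of the \emph{exact} division of Algorithm~\ref{QPdivision} (its remainder is eventually $0$) on those congruence classes where $\Delta_{i-1}$ is not eventually $0$, and $d_i=0$ on the rest; either way $d_i\in\QP$, so $D\in(\QP)^{k\times s}$ already. (The divisibilities $d_i\mid d_{i+1}$ are automatic since $D(n)$ is by hypothesis a genuine Smith form.)

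Next I will construct eventually unimodular $U\in(\QP)^{k\times k}$ and $V\in(\QP)^{s\times s}$ with $UMV=D$. After passing to a common period and restricting to a congruence class I may assume every entry of $M$ lies in $\Z[x]$ (the representations guaranteed just before Algorithm~\ref{QPdivision}); the steps below will further refine this class, which only enlarges the period. The only unimodular moves I need over $\QP$ are: permutations of rows or columns; transvections $R_j\mapsto R_j+cR_\ell$ (and the column versions) with $c\in\QP$; and, given $f,g\in\QP$ with $d=\ggcd(f,g)=u_1f+u_2g$ from Corollary~\ref{ggcd}, the block $\left(\begin{smallmatrix} u_1 & u_2 \\ -g/d & f/d \end{smallmatrix}\right)$, whose determinant is $(u_1f+u_2g)/d=1$, whose entries lie in $\QP$ by exact division, and which sends $\left(\begin{smallmatrix} f \\ g \end{smallmatrix}\right)$ to $\left(\begin{smallmatrix} d \\ 0 \end{smallmatrix}\right)$. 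Using a permutation I bring a nonzero entry to position $(1,1)$ (and, whenever convenient, scale a row by $-1$ to keep its leading coefficient positive); then I annihilate the rest of column $1$ one entry at a time with these $2\times2$ blocks, then annihilate the rest of row $1$, and iterate the pair of operations. After each full pass the $(1,1)$ entry divides the previous one, so its degree never increases and, at fixed degree, its (positive) leading coefficient strictly decreases until the next degree drop — this is exactly the three-case termination analysis from the proof of Theorem~\ref{gcd}, so the pivot stabilizes after finitely many passes. Once it stabilizes, clearing a row or column requires only transvections against the cross $(d,0,\dots,0)$, which leaves the complementary entries untouched, so row $1$ and column $1$ are simultaneously clear and I reach $\operatorname{diag}(d_1')\oplus M'$. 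A final sub-loop — whenever some entry of $M'$ is not divisible by $d_1'$, add that row to row $1$ and restart, again making the pivot measure strictly drop — terminates with $d_1'$ dividing every entry of $M'$, forcing $d_1'=\Delta_1=d_1$. Recursing on the smaller matrix $M'$ and composing all the accumulated transformations yields $U,V\in\QP$, eventually unimodular, with $UMV$ diagonal and having the right divisibility chain; by uniqueness of the Smith normal form it equals $D$.

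The hard part is the termination bookkeeping in the previous paragraph. Every invocation of Algorithm~\ref{QPdivision} or of Corollary~\ref{ggcd} may split the current congruence class into finitely many sub-classes, so what I must really check — exactly as in Theorem~\ref{gcd} — is that along every branch of the resulting decision tree only finitely many steps occur; this holds because the branching at each node is bounded (the $T=b_\ell^{k-\ell}$ of Algorithm~\ref{divisionalg} and the loop lengths are all bounded by the data), and along any branch the degree of the current pivot is a nonnegative integer that never increases while its positive leading coefficient strictly decreases between degree drops, so the whole tree is finite. I expect this $\QP$-bookkeeping, rather than any single algebraic identity, to be the main obstacle, precisely because the usual PID structure theory is unavailable here (by the Remark after Corollary~\ref{ggcd}, $\QP$ is not even Noetherian) and so everything must be done algorithmically and shown to halt by hand.
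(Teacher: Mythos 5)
Your argument is correct and follows the same overall strategy as the paper: run the classical Smith normal form reduction, replacing Euclidean division and extended gcd by their $\QP$-analogues from Algorithm~\ref{QPdivision} and Corollary~\ref{ggcd}, and verify termination by the degree/leading-coefficient measure already used in the proof of Theorem~\ref{gcd}. The concrete machinery differs in two respects, both worth noting. First, to clear a column you iterate $2\times 2$ Bezout blocks with rows $(u_1,u_2)$ and $(-g/d,\,f/d)$, whereas the paper first proves a separate Lemma~\ref{row}: given a row vector $[\alpha_1,\dots,\alpha_k]$ over $\QP$ there is a $k\times k$ matrix over $\QP$ with that first row whose determinant equals $\ggcd(\alpha_1,\dots,\alpha_k)$; one left-multiplication by such a matrix (with first row the Bezout coefficients $u_1,\dots,u_k$, hence determinant $1$) then puts the full-column gcd into the pivot in a single step. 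Your $2\times 2$ approach is more elementary and avoids Lemma~\ref{row}'s inductive construction, at the cost of the somewhat more delicate pass-by-pass termination bookkeeping you supply explicitly; the paper leaves that bookkeeping largely implicit and uses Lemma~\ref{row} to shorten each pass. Second, you identify $D$ up front via determinantal divisors $\Delta_i=\ggcd$ of the $i\times i$ minors, obtaining $D\in(\QP)^{k\times s}$ before constructing $U,V$ and then invoking uniqueness of the Smith form at the end; the paper instead reads $D$ off directly from the terminating algorithm. Your minor-gcd observation is a clean alternative route and gives $D\in\QP$ independently of the transformation matrices, though it becomes logically redundant once the algorithm is shown to halt with a diagonal output having the divisibility chain.
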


Before the proof, let us see a simple example for generalized gcd and generalized Smith normal form.
\begin{eg}Let $f=2$, $g=x$. Denote $d=\ggcd(f,g)$, namely $d(n)=\gcd(2,n)$. \\
  Put $T=2$ and $d_0(x)=2$, $d_1(x)=1$, $u_0(x)=1$, $u_1(x)=-x$,
  $v_0(x)=0$, $v_1(x)=1$, we
  have
  \[
  d(n)=d_i(m)=u_i(m)f(n)+v_i(m)g(n) \text{ for all } n=Tm+i \text{ and
  } i=0,1.
  \]

  For $A=\begin{pmatrix} 2 & 3x+2\\ x & x^3+2x \\ \end{pmatrix}$, let
  $U(x)=[U_{0}(x),U_{1}(x)]$, where
  \[
  U_{0}(x)=\begin{pmatrix} 1 & 0\\ -x & 1 \end{pmatrix},
  \quad U_{1}(x)=\begin{pmatrix} -x & 1\\ -2x-1 &
      2 \end{pmatrix},
  \]
  and $V(x)=[V_{0}(x),V_{1}(x)]$, where
  \[
  V_{0}(x)=\begin{pmatrix} 1 & -(3x+1)\\ 0 & 1 \end{pmatrix}, \quad
  V_{1}(x)=\begin{pmatrix} 1 & -(8x^3+6x^2+5x+3)\\ 0 &
      1 \end{pmatrix},
  \]
  and $D(x)=[D_{0}(x),D_{1}(x)]$, where
  \[
  D_{0}(x)=\begin{pmatrix} 2 & 0\\ 0 & 8x^3-6x^2+2x \end{pmatrix},
  \quad D_{1}(x)=\begin{pmatrix} 1 & 0\\ 0 &
    16x^3+12x^2+4x+1 \end{pmatrix}.
  \]
  We have $U_i(m)A(n)V(m)=D_i(m)$ is the Smith normal form for $A(n)$
  for $n\gg0$ where $n=2m+i$ and $i=0,1$.
\end{eg}

The following proof is based on the proof for classical Smith normal
form in \cite{new}.

\begin{lemma} \label{row}Let
$\alpha_1,\alpha_2,\dots,\alpha_k\in\QP$ with
$\ggcd(\alpha_1,\alpha_2,\dots,\alpha_k)=d_k$. Then there is a
matrix $W_k\in (\QP)^{k\times k}$ with first row
$[\alpha_1,\alpha_2,\dots,\alpha_k]$ and determinant $d_k$.
\end{lemma}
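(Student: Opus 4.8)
The plan is to prove Lemma~\ref{row} by induction on $k$, carrying over the classical fact that over a B\'ezout ring a row whose entries have gcd $d$ can be completed to a square matrix of determinant $d$. The only non-classical ingredient is that ``divides'', ``quotient'', and ``B\'ezout identity'' must now be interpreted via the operations on $\QP$ supplied by Theorem~\ref{gcd}, Algorithm~\ref{QPdivision}, and Corollary~\ref{ggcd}; since $\QP$ is closed under addition and multiplication, determinants and cofactors of matrices over $\QP$ again lie in $\QP$, and the induction goes through formally.

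For $k=1$ take $W_1=(\alpha_1)$. Assume the result for $k-1$. Put $d_{k-1}=\ggcd(\alpha_1,\dots,\alpha_{k-1})$, which lies in $\QP$ by Corollary~\ref{ggcd}; since the gcd of integers is associative, $d_k=\ggcd(d_{k-1},\alpha_k)$. By the inductive hypothesis choose $W_{k-1}\in(\QP)^{(k-1)\times(k-1)}$ with first row $[\alpha_1,\dots,\alpha_{k-1}]$ and $\det W_{k-1}=d_{k-1}$, and let $C_1,\dots,C_{k-1}\in\QP$ be the cofactors of the entries of that first row, so that Laplace expansion gives $\sum_j\alpha_j C_j=d_{k-1}$. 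Since $d_{k-1}\mid\alpha_j$ for $j<k$, Algorithm~\ref{QPdivision} produces $\beta_j\in\QP$ with $\alpha_j=d_{k-1}\beta_j$; if $d_{k-1}(n)\ne 0$ for $n\gg 0$ we may cancel $d_{k-1}$ from $\sum_j d_{k-1}\beta_j C_j=d_{k-1}$ to obtain $\sum_j\beta_j C_j=1$. (After refining to a common period, we may assume on each constituent that either $d_{k-1}$ is eventually nonzero, or $\alpha_1,\dots,\alpha_{k-1}$ are all eventually zero; in the latter degenerate case $d_k=\ggcd(0,\alpha_k)$ and an explicit matrix with $\alpha_k$ in the top-right corner, suitable $\pm 1$'s just below the diagonal, and zeros elsewhere in the first row finishes the argument.)

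Now let $W_k\in(\QP)^{k\times k}$ have first row $[\alpha_1,\dots,\alpha_{k-1},\alpha_k]$, rows $2,\dots,k-1$ equal to the corresponding rows of $W_{k-1}$ with a trailing $0$ appended, and last row $[q\beta_1,\dots,q\beta_{k-1},p]$, where $p,q\in\QP$ are still to be chosen. Expanding $\det W_k$ along the last column, only the $(1,k)$ and $(k,k)$ entries contribute: the cofactor of $p$ is $\det W_{k-1}=d_{k-1}$, and the cofactor of $\alpha_k$ is the determinant of the matrix formed by rows $2,\dots,k-1$ of $W_{k-1}$ together with the row $[q\beta_1,\dots,q\beta_{k-1}]$, which by a further Laplace expansion along its last row, together with $\sum_j\beta_j C_j=1$, works out (after the signs cancel) to exactly $-q$, independently of $k$. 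Hence $\det W_k=p\,d_{k-1}-q\,\alpha_k$. By Corollary~\ref{ggcd} applied to $d_{k-1}$ and $\alpha_k$ we may choose $u_1,u_2\in\QP$ with $u_1 d_{k-1}+u_2\alpha_k=d_k$; setting $p=u_1$, $q=-u_2$ gives $\det W_k=d_k$. All entries of $W_k$ lie in $\QP$, so $W_k\in(\QP)^{k\times k}$, completing the induction.

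The only real work is the sign bookkeeping in the last-column Laplace expansion — verifying that the cofactor of $\alpha_k$ is precisely $-q$, so the determinant is $d_k$ and not $-d_k$ — and checking that each quantity produced along the way (the quotients $\beta_j$, the cofactors $C_j$, the B\'ezout coefficients $p,q$) genuinely lies in $\QP$, which is exactly what Theorem~\ref{gcd}, Algorithm~\ref{QPdivision}, and Corollary~\ref{ggcd} are for. The degenerate case $d_{k-1}\equiv 0$ is peeled off at the start and handled by an explicit matrix.
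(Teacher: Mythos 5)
Your proof is correct and follows essentially the same strategy as the paper: induct on $k$, use $d_k=\ggcd(d_{k-1},\alpha_k)$ with a B\'ezout identity from Corollary~\ref{ggcd}, and append to $W_{k-1}$ a column $(\alpha_k,0,\dots,0)^T$ and a last row proportional to $(\alpha_1/d_{k-1},\dots,\alpha_{k-1}/d_{k-1})$ so that the determinant comes out to $p\,d_{k-1}-q\,\alpha_k=d_k$; your $[q\beta_1,\dots,q\beta_{k-1},p]$ is exactly the paper's $[\sigma\alpha_1/d_{k-1},\dots,\sigma\alpha_{k-1}/d_{k-1},\rho]$. Your explicit sign check on the cofactor of $\alpha_k$ and your peeling off of the degenerate constituent where $d_{k-1}$ vanishes identically are welcome extra care that the paper leaves implicit.
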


\begin{proof} By induction on $k$. For $k=1$, trivial. Now suppose the
  lemma is true for $k=n-1$, where $n\ge 2$, and let
  $W_{k-1}\in(\QP)^{(k-1) \times (k-1)}$ be a matrix with first row
  $[\alpha_1,\alpha_2,\dots,\alpha_{k-1}]$ and determinant
  $d_{k-1}=\ggcd(\alpha_1,\alpha_2,\dots,\alpha_{k-1})$.  Since
\begin{align*}
d_k&=\ggcd(\alpha_1,\alpha_2,\dots,\alpha_{k})\\
&=\ggcd(\alpha_1,\alpha_2,\dots,\alpha_{k-1}),\alpha_k)\\
&=\ggcd(d_{k-1},\alpha_k),
\end{align*}
by Corollary~\ref{ggcd}, there exist $\rho,\sigma\in\QP$ such
that $\rho d_{k-1}-\sigma\alpha_k=d_k$. Put
\[
W_k=\begin{pmatrix}
  &  &  &  & \alpha_k \\
  & W_{k-1} &  &  & 0 \\
  &  &  &  & \vdots \\
  &  &  &  & 0 \\
  \frac{\alpha_{1}\sigma}{d_{k-1}} & \frac{\alpha_{2}\sigma}{d_{k-1}} & \cdots & \frac{\alpha_{k-1}\sigma}{d_{k-1}} & \rho \\
\end{pmatrix}.
\]
Then we have $W_k\in (\QP)^{k\times k}$ with first row
$[\alpha_1,\alpha_2,\dots,\alpha_{k}]$ and $\det(W_k)=\rho d_{k-1}-\sigma\alpha_k=d_k$.
\end{proof}

\begin{proof}[Proof of Theorem $\ref{smith}$] Write $A=(a_{ij})\neq 0
  \in (\QP)^{k\times s}$. If $a_{11} \ne 0$, then switch rows and
  columns so that this is true. Without loss of generality, we may
  assume that $a_{11}$ divides all elements in the column. If not, let
  $d = \ggcd(a_{11}, \dots, a_{k1})$. By Corollary $\ref{ggcd}$, there
  exist $u_1, \dots, u_k \in \QP$ with $\ggcd(u_1,\dots,u_k)=1$ such
  that $u_1a_{11} + \dots + u_ka_{k1} = d$. By Lemma $\ref{row}$,
  there exists a matrix $W\in (\QP)^{k\times k}$ with first row
  $[u_1,\dots,u_k]$ and determinant $1$. So we have $A'=(a'_{ij})=WA$
  such that $a'_{11}$ divides all elements in the column. Suppose
  $a_{11}$ divides all elements $a_{ij}$. First, by the same process
  as above, we can assume that $a_{11}$ divides all elements in the
  first row and column. By elementary row and column operations, all
  the elements in the first row and column, other than the $(1,1)$
  position, can be made zero. If there is some element $a_{ij}$,
  $i,j\neq 1$ is not divisibleby $a_{11}$, add column $j$ to column
  $1$ and repeat the previous step to replace $a_{11}$ with one of its
  divisor which divides $a_{ij}$. Thus we must finally reach the stage
  where the element in the $(1,1)$ position divides every element of
  the matrix and all the other elements of the first row and column
  are zero.  Repeat the entire process with the submatrix obtained by
  deleting the first row and column.
\end{proof}

\section{Application of generalized division in the computation of
  some generalized Ehrhart polynomials.} \label{section:application}
We can see that the algorithm we give in the proof of
Theorem~\ref{ehrhartconj} is not very efficient.  In this section, by
the theory of generalized division developed in the previous section,
we are able to compute some special cases more efficiently.

\subsection{Generalized Popoviciu formula.} \label{section:example1}
First recall the classical Popoviciu formula. Let $a$ and $b$ be
relatively prime positive integers. Then the number of nonnegative
integer solutions $(x,y)$ to the equation $ax + by = n$ is given by
the formula \eqref{popoviciu}. Now consider the
equation $$a(n)x+b(n)y=m(n)$$ where $a(n),b(n),m(n)$ are integeral
polynomials of $n$. By the theory of generalized division, the above
formula for the number of nonnegative integer solutions
$p_{\{a_1(n),a_2(n)\}}(m(n))$ can be easily generalized to the
following

\begin{proposition}
  Suppose $\ggcd(a(n),b(n))=1$, then
$$p_{\{a_1(n),a_2(n)\}}(m(n)) = \frac{m(n)}{a_1(n)a_2(n)} -
\left\{\frac{a_1(n)^{-1}m(n)}{a_2(n)}\right\} -
\left\{\frac{a_2(n)^{-1}m(n)}{a_1(n)}\right\} + 1$$
where $a_1(n)a_1(n)^{-1}+a_2(n)a_2(n)^{-1}=1$. In particular,
$p_{\{a_1(n),a_2(n)\}}(m(n))\in \QP$.
\end{proposition}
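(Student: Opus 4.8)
The plan is to mimic the classical proof of the Popoviciu formula (as in \cite[Chapter 1]{ccd}), but working with the polynomial/quasi-polynomial arithmetic developed in Section~\ref{generalizeddivision} in place of ordinary integer arithmetic. The classical argument proceeds by writing the number of nonnegative solutions of $a_1 x + a_2 y = m$ as the constant term of a suitable generating function, or more elementarily by the following counting trick: for each residue class of $y$ modulo $a_1$ there is at most one value of $y$ in the relevant range making $x = (m - a_2 y)/a_1$ a nonnegative integer, and one counts these directly. The key identities one needs are that $a_2 a_2^{-1} m \equiv m \pmod{a_1}$ pins down $y \bmod a_1$, and a symmetric statement for $x \bmod a_2$; everything then collapses to the stated combination of a linear term and two fractional parts.

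The key steps, in order, are: (1) invoke Corollary~\ref{ggcd} with $k = 2$ to produce $a_1(n)^{-1}, a_2(n)^{-1} \in \QP$ with $a_1(n) a_1(n)^{-1} + a_2(n) a_2(n)^{-1} = 1$ for $n \gg 0$ — this is exactly the hypothesis $\ggcd(a(n), b(n)) = 1$ being usable; (2) for a fixed large $n$, run the classical Popoviciu count with the integers $a_1(n), a_2(n), m(n)$, which yields formula~\eqref{popoviciu} with those integers substituted; (3) observe that each ingredient of the resulting expression — the rational function $\frac{m(n)}{a_1(n)a_2(n)}$, and the two fractional parts $\left\{\frac{a_1(n)^{-1} m(n)}{a_2(n)}\right\}$ and $\left\{\frac{a_2(n)^{-1} m(n)}{a_1(n)}\right\}$ — lies in $\QP$, using Theorem~\ref{gcd} together with the fact (stated at the top of Section~\ref{generalizeddivision}) that $\left\{\frac{f(n)}{g(n)}\right\} g(n) \in \QP$ and hence, after clearing by $g$ on each constituent congruence class, $\left\{\frac{f(n)}{g(n)}\right\}$ agrees with an element of $\QP$ wherever it is genuinely a fractional part; (4) note that $a_1(n)^{-1} m(n)$ is a product of two elements of $\QP$, hence in $\QP$, so the numerators are legitimate inputs to the generalized division; (5) conclude that the whole right-hand side, being a finite sum of elements of $\QP$, lies in $\QP$, and that it equals $p_{\{a_1(n), a_2(n)\}}(m(n))$ for $n \gg 0$ since it does so at every sufficiently large integer by step (2).

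I expect the main obstacle to be step (3), and more precisely the bookkeeping needed to make sense of $\left\{\frac{f(n)}{g(n)}\right\}$ as an element of $\QP$ rather than of $\QP$ divided by $g$: the clean statement from Section~\ref{generalizeddivision} is that $r(n) = \left\{\frac{f(n)}{g(n)}\right\} g(n) \in \QP$, so to land the fractional part itself in $\QP$ one must pass to constituents modulo a common period of $f$, $g$, and $r$, divide the polynomial $r_i$ by the polynomial $g_i$ on each class (which is exact, since $r(n)/g(n)$ is the fractional part and $g(n) \mid g(n)\{f(n)/g(n)\}\cdot$nothing — rather, one uses that $r_i(m)/g_i(m)$ takes values in $[0,1)$ and is a ratio of polynomials, forcing it to be eventually a rational \emph{function} which here is in fact bounded hence one must argue it is eventually polynomial of degree $0$ only when $\deg r_i < \deg g_i$), and otherwise keep track of the integer part absorbed into the linear term. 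A cleaner route, which I would actually take, is to avoid isolating the fractional parts and instead verify the identity at the level of integers for each $n \gg 0$ — invoking the classical Popoviciu formula as a black box — and then argue membership in $\QP$ by exhibiting the right-hand side as built from $\QP$-operations (generalized division by Theorem~\ref{gcd}, multiplication, addition), so that the only real content is checking closure of $\QP$ under these operations, all of which is supplied by Section~\ref{generalizeddivision}. The secondary nuisance is purely notational: the proposition as stated writes $\{a_1(n), a_2(n)\}$ and $m(n)$ where the surrounding text uses $a(n), b(n)$; I would silently reconcile this by setting $a_1 = a$, $a_2 = b$.
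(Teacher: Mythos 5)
The paper offers no proof of this proposition at all: it simply remarks that the classical Popoviciu formula ``can be easily generalized'' using the generalized division theory of Section~\ref{generalizeddivision}, and moves on. So there is no written argument to compare against; the question is only whether your sketch would fill that gap correctly.

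Your overall plan (apply the classical Popoviciu formula pointwise for each $n\gg 0$, then argue $\QP$-membership using the Section~\ref{generalizeddivision} machinery) is the intended one, and step~(1) correctly produces $a_1(n)^{-1},a_2(n)^{-1}\in\QP$ from Corollary~\ref{ggcd}. However, step~(3) as written is false, and your ``cleaner route'' does not actually repair it: $\QP$ consists by definition of \emph{integer-valued} functions, and none of the three summands $\frac{m(n)}{a_1(n)a_2(n)}$, $\left\{\frac{a_1(n)^{-1}m(n)}{a_2(n)}\right\}$, $\left\{\frac{a_2(n)^{-1}m(n)}{a_1(n)}\right\}$ is integer-valued, so you cannot place any of them individually in $\QP$, nor can you say the right-hand side is literally ``built from $\QP$-operations.'' Only the full four-term combination is integer-valued. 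The correct repair, which you gesture at but do not carry out, is to clear denominators: multiply the Popoviciu identity by $a_1(n)a_2(n)$. Then $\left\{\frac{a_1(n)^{-1}m(n)}{a_2(n)}\right\}a_2(n)$ and $\left\{\frac{a_2(n)^{-1}m(n)}{a_1(n)}\right\}a_1(n)$ are precisely the remainders output by Algorithm~\ref{QPdivision} (Theorem~\ref{gcd}) applied to the $\QP$-elements $a_i(n)^{-1}m(n)$ (a product of elements of $\QP$, hence in $\QP$) and the polynomials $a_j(n)$, so both lie in $\QP$. Hence $p_{\{a_1,a_2\}}(m(n))\cdot a_1(n)a_2(n)\in\QP$, and one more application of generalized division by $a_1(n)a_2(n)$ (which is exact, since the quotient is an integer for every $n$) yields $p_{\{a_1,a_2\}}(m(n))\in\QP$. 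It is also worth noting that the ``in particular'' clause follows for free from Theorem~\ref{ehrhartconj}: $p_{\{a_1(n),a_2(n)\}}(m(n))$ counts lattice points on a parametrized segment, so it lies in $\QP$ without any appeal to the explicit formula; the content of the proposition is really the closed form itself.
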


If $\ggcd(a(n),b(n))=d(n)\neq 1$, we can simply divide it from both
sides and reduce to the case of $\ggcd(a(n),b(n))=1$.

Then as mentioned in the introduction, we can consider the number of
solutions $(x,y,z) \in \Z_{\ge 0}^3$ to the matrix equation
\eqref{matrixexample} where the $x_i$ and $y_i$ are fixed positive
integers and $x_{i+1}y_i < x_iy_{i+1}$ for $i=1,2$. Write $Y_{ij} =
x_iy_j - x_jy_i$. We assume that $\gcd(Y_{12}, Y_{13}, Y_{23}) = 1$,
so that there exist integers (not unique) $f_{ij}, g_{ij}$ such that
\[
\gcd(f_{12}Y_{13} + g_{12}Y_{23}, Y_{12}) = 1, \quad \gcd(f_{13}Y_{12}
+ g_{13}Y_{23}, Y_{13}) = 1, \quad \gcd(f_{23}Y_{13} + g_{23}Y_{12},
Y_{23}) = 1.
\]
Now define two regions $\Omega_i = \{(x,y) \mid \frac{y_i}{x_i} <
\frac{y}{x} < \frac{y_{i+1}}{x_{i+1}} \}$ for $i=1,2$. Then if $m =
(m_1, m_2) \in \Z^2$ is in the positive span of the columns of the
matrix in \eqref{matrixexample}, there exist the following
Popoviciu-like formulas \cite[Theorem 4.3]{xu} for $t(m|A)$, the
number of solutions of \eqref{matrixexample}.

When $m=(m_1,m_2)^{T}\in \overline{\Omega}_1\cap \Z^{2}$,
\begin{align*}
  t(m|A)=& \frac{m_2x_1-m_1y_1}{Y_{12}Y_{13}}-
  \left\{\frac{(f_{12}Y_{13}+g_{12}Y_{23})^{-1}
      (m_2(f_{12}x_1+g_{12}x_2)-m_1(f_{12}y_1+g_{12}y_2))}{Y_{12}}\right\} 
  \\
  &-\left\{\frac{(f_{13}Y_{12}+g_{13}Y_{23})^{-1}
      (m_2(f_{13}x_1+g_{13}x_3)-m_1(f_{13}y_1+g_{13}y_3))}{Y_{13}}\right\}+1.
\end{align*}
When $m=(m_1,m_2)^{T}\in \overline{\Omega}_2\cap \Z^{2}$,
\begin{align*}
  t(m|A)=& \frac{m_1y_3-m_2y_3}{Y_{23}Y_{13}}-
  \left\{\frac{(f_{23}Y_{13}+g_{23}Y_{12})^{-1}
      (m_1(f_{23}x_3+g_{23}x_2)-m_2(f_{23}y_3+g_{23}y_2))}{Y_{23}}\right\}
  \\
  &-\left\{\frac{(f_{13}Y_{12}+g_{13}Y_{23})^{-1}
      (m_1(f_{13}x_1+g_{13}x_3)-m_2(f_{13}y_1+g_{13}y_3))}{Y_{13}}\right\}+1, 
\end{align*}
where $f_{12},g_{12},f_{13},g_{13},f_{23}$ and $g_{23}\in \Z$ satisfy
\[
\gcd(f_{12}Y_{13}+g_{12}Y_{23},Y_{12}) =
\gcd(f_{13}Y_{12}+g_{13}Y_{23},Y_{13})=
\gcd(f_{23}Y_{13}+g_{23}Y_{12},Y_{23})=1
\]
and
\begin{align*}
  (f_{12}Y_{13}+g_{12}Y_{23})^{-1}(f_{12}Y_{13}+g_{12}Y_{23}) &\equiv
  1 \pmod {Y_{12}},\\
  (f_{13}Y_{12}+g_{13}Y_{23})^{-1}(f_{13}Y_{12}+g_{13}Y_{23}) &\equiv
  1 \pmod{Y_{13}},\\
  (f_{23}Y_{13}+g_{23}Y_{12})^{-1}(f_{23}Y_{13} +g_{23}Y_{12}) &\equiv
  1 \pmod{Y_{23}}.
\end{align*}

Notice that in the above formula, everything can be directly
generalized to the ring $\Z[x]$ using generalized division and
generalized GCD. In other words, we can replace the $x_i$, $y_i$, and
$m_i$ by polynomials in $n$ in such a way that for all values of $n$,
the condition $\gcd(Y_{12}, Y_{13}, Y_{23}) = 1$ always holds. For
example, consider the system
\[
\left( \begin{matrix} 2n+1 & 3n+1 & n^2 \\ 2 & 3 & n+1 \end{matrix}
\right) \left( \begin{matrix} x \\ y \\ z \end{matrix} \right) =
\left( \begin{matrix} 3n^3 + 1 \\ 3n^2 + n - 1 \end{matrix} \right).
\]
Then for $n \gg 0$, we have that
\[
\frac{3}{3n+1} < \frac{3n^2 + n - 1}{3n^3 + 1} < \frac{n+1}{n^2},
\]
so the number of solutions $(x,y,z)$ is counted by the formula of $t(m|A)$ for $m\in \overline{\Omega}_2$ and it is easy to see that  $t(m|A)\in\QP$.

\subsection{Dimension two.}
For polytopes in dimension two, there is an efficient algorithm for
computing the number of lattice points using continued fractions. See
\cite[Chapter 15]{ipp} for the algorithm in the case of
non-parametrized two dimensional polytopes.

By the generalized division theory, we can generalize this algorithm
to the case of a polytope whose vertices are given by
polynomials. Notice that the only difference is the generalization of
continued fractions: for any $f,g\in \Z[x]$, we expand $f/g$ as a
continued fraction. Then by Theorem~\ref{gcd}, for $n\gg 0$, the terms
in the expansion are quasi-polynomials and the number of terms does not
depend on $n$.

We use the following notation for continued fractions. First, set $[a]
= a$ and in general, we set
\[
[a_0; a_1, \dots, a_k] = a_0 + \frac{1}{[a_1; a_2, \dots, a_k]}.
\]

\begin{example}
 For $n>4$, we have
$$\frac{n^2}{2n+1}=\begin{cases}
  [m-1;\,1,\,3,\,m] & n=2m\\
  [m-1;\,3,\,1,\,m-1] & n=2m-1
  \end{cases}.
$$And for the cone generated by $(0,1)$ and $(2n+1,n^2)$, we have the following efficient algorithm of decomposition into prime cones.

$
\xy
0;/r.2pc/:
(40,0)*{}="G"; (50,0)*{}="H"; (50,10)*{}="L";
(40,-3)*{\textbf{0}};
(50,-3)*{\textbf{1}};
(50,14)*{(2n+1,n^2)};
 "G"; "L"**\dir{-};
 "H"; "G"**\dir{-};
\endxy
=
\xy
0;/r.2pc/:
(40,0)*{}="G"; (50,0)*{}="H"; (40,10)*{}="L";
(40,-3)*{\textbf{0}};
(50,-3)*{\textbf{1}};
(37,10)*{\textbf{1}};
 "G"; "L"**\dir{-};
 "H"; "G"**\dir{-};
\endxy
-
\xy
0;/r.2pc/:
(40,0)*{}="G"; (50,6)*{}="H"; (40,10)*{}="L";
(40,-3)*{\textbf{0}};
(52,10)*{(1,m-1)};
(37,10)*{\textbf{1}};
 "G"; "L"**\dir{-};
 "H"; "G"**\dir{-};
\endxy
+ 
\xy
0;/r.2pc/:
(40,0)*{}="G"; (50,6)*{}="H"; (47,12)*{}="L";
(40,-3)*{\textbf{0}};
(58,10)*{(1,m-1)};
(50,15)*{(1,m)};
 "G"; "L"**\dir{-};
 "H"; "G"**\dir{-};
\endxy
 - 
\xy
0;/r.2pc/:
(40,0)*{}="G"; (53,10)*{}="H"; (47,12)*{}="L";
(40,-3)*{\textbf{0}};
(64,5)*{(4,4m-1)};
(50,15)*{(1,m)};
 "G"; "L"**\dir{-};
 "H"; "G"**\dir{-};
\endxy
+
\xy
0;/r.2pc/:
(40,0)*{}="G"; (53,10)*{}="H"; (50,10)*{}="L";
(40,-3)*{\textbf{0}};
(64,5)*{(4,4m-1)};
(60,14)*{(2n+1,n^2)};
 "G"; "L"**\dir{-};
 "H"; "G"**\dir{-};
\endxy$
\end{example}

\bigskip

\filbreak \noindent Sheng Chen, 
Department of Mathematics, 
Harbin Institute of Technology, 
Harbin, China 150001, 
{\tt schen@hit.edu.cn}

\bigskip

\filbreak \noindent Nan Li and Steven V Sam, 
Department of Mathematics, 
Massachusetts Institute of Technology, 
Cambridge, MA 02139, 
{\tt \{nan,ssam\}@math.mit.edu}
\end{document}